\documentclass[12pt]{amsart}
\usepackage{graphicx} 
\usepackage{amssymb,latexsym,eucal}

\usepackage{hyperref}
\usepackage[letterpaper,centering,text={6.5in,9in}]{geometry}
\usepackage{latexsym}
\usepackage[utf8]{inputenc}
\usepackage{amsmath}
\usepackage{amsthm}
\usepackage{bigints}
\usepackage{amsfonts}
\usepackage{amssymb}
\usepackage{epsfig}
\usepackage{nicefrac}
\usepackage[all]{xy}
\usepackage{graphicx}
\usepackage{enumerate}
\usepackage{enumitem}
\DeclareGraphicsRule{.tif}{png}{.png}{`convert #1 `basename #1 .tif`.png}

\usepackage{mathtools}    
\DeclarePairedDelimiterX\setc[2]{\{}{\}}{\,#1 \;\delimsize\vert\; #2\,}
\usepackage{float}
\usepackage{pgfplots}

\usepackage{caption,subcaption}
\usepackage{breqn}
\usepackage{amsaddr}
\usepackage{tikz}
\usetikzlibrary{
intersections, arrows.meta,
automata,er,calc,
backgrounds,
mindmap,folding,
patterns,
decorations.markings,
fit,decorations.pathmorphing,
shapes,matrix,
positioning,
shapes.geometric,
arrows,through, 
}
\usepackage{tikz-cd}

\def\bigmid{\ \rule[-3.5mm]{0.1mm}{9mm}\ }

\newtheorem{theorem}{Theorem}[section]
\newtheorem{corollary}[theorem]{Corollary}

\newtheorem{lemma}[theorem]{Lemma}

\theoremstyle{definition}

\newtheorem{remark}[theorem]{Remark}

\newcommand{\CC}{{\mathbb C}}

\newcommand{\RR}{{\mathbb R}}
\newcommand{\ZZ}{{\mathbb Z}}

\newcommand{\QQ}{{\mathbb Q}}

\newcommand{\ton}{{\otimes_{\Lambda_{\geq 0}}}}

\newcommand{\lr}{{\,\,\longrightarrow\,\,}}

\title{Comparison of symplectic capacities}
\author{Jonghyeon Ahn}
\newcommand{\Addresses}{{
\bigskip
\bigskip
\footnotesize
\textsc{Department of Mathematics,
University of Illinois Urbana-Champaign, Urbana, IL, 61801, USA.}\par\nopagebreak
\textit{E-mail address}: \texttt{ja34@illinois.edu}}}

\date{}

\begin{document}

\maketitle
\begin{abstract}
   In this paper, we compare the symplectic (co)homology capacity with the spectral capacity in the relative case. This result establishes a chain of inequalities of relative symplectic capacities, which is an analogue of the non-relative case. This comparison gives us a criterion for the relative almost existence theorem in terms of heaviness. Also, we investigate a sufficient condition under which the symplectic (co)homology capacity and the first Gutt-Hutchings capacity are equal in both non-relative and relative cases. This condition is less restrictive than the dynamical convexity. \\
\end{abstract}
\bigskip
\section{Motivation and results}
A key challenge in symplectic geometry is to determine when one symplecitic manifold can be symplectically embedded into another. Gromov asserts in his seminal paper \cite{gro} the ``non-squeezing theorem" saying that the ball
\begin{align*}
    B^{2n}(r) = \left\{ z \in \CC^n \bigmid \pi|z|^2 \leq r \right\}
\end{align*}
can be symplectically embedded into the cylinder
\begin{align*}
    Z^{2n}(R) = \left\{ z = (z_1,\cdots,z_n) \in \CC^n \bigmid \pi|z_1|^2 \leq R \right\}
\end{align*}
if and only if $r \leq R$. Inspired by this marvelous statement, we can obtain a nontrivial obstruction to the existence of symplectic embeddings which is more subtle than the volume restriction: A \textbf{symplectic capacity} $c$ assigns to symplectic manifold $(M, \omega)$ a number $c(M, \omega) \in [0, \infty]$ satisfying
\begin{itemize}
    \item (Monotonicity) if there exists a symplectic embedding $\phi : (M, \omega) \hookrightarrow (M', \omega')$, then $c(M,\omega) \leq c(M', \omega')$, and\\
    \item (Conformality) if $\lambda > 0$, then $c(M, \lambda \omega) = \lambda c(M, \omega)$.\\
    
\end{itemize}
 We say that a symplectic capacity $c$ is \textbf{normalized} if it satisfies
\begin{itemize}
    \item $c\left(B^{2n}(1), \omega_{2n}\right) = c(Z^{2n}(1), \omega_{2n}) = 1$\\
\end{itemize}
where $\omega_{2n}$ is the standard symplectic form in $\CC^n = \RR^{2n}$. We usually drop the symplectic form $\omega$ from the notation of the symplectic capacity $c(M,\omega)$ whenever it is clear from the context. One immediate example of normalized symplectic capacity from the non-squeezing theorem is the \textbf{Gromov width} $c^{Gr}(M)$ given by
\begin{align*}
    c^{Gr}(M) = \sup\left\{ r \bigmid B^{2n}(r)\,\,\text{can be symplectically embedded into}\,\,M\right\}.
\end{align*}
There are many examples of normalized symplectic capacities such as the Hofer-Zehnder capacity $c^{HZ}$ defined in \cite{hz}, symplectic (co)homology capacity $c^{SH}$ introduced in \cite{fhw, vit}, the first Gutt-Hutchings capacity $c^{GH}_1$ established in \cite{gh}. 
\\

One major question that encompasses the normalized symplectic capacities, which was recently refuted by Ostrover and Haim-Kislev \cite{ohk}, is (strong) \textit{Viterbo's conjecture}: If $K$ is a convex domain in $\RR^{2n}$, then all normalized symplectic capacities of $K$ are equal. The paper \cite{ghr} provides a comprehensive exposition of relevant results pertaining to this question. Although disproved in general, it still leaves many interesting questions. In this paper, we investigate specific symplectic capacities for which the equality asserted by Viterbo's conjecture could potentially hold.\\

As a fundamental first step in establishing the equality, we require inequalities. Significant progress has been made in deriving inequalities that relate the \textit{$\pi_1$-sensitive Hofer-Zehnder capacity} $\tilde{c}^{HZ}(K)$, the \textit{spectral capacity} $c^S(K)$ and the \textit{symplectic (co)homology capacity} $c^{SH}(K)$ for a Liouville domain $K$. We summarize the results below.
\begin{theorem}[\cite{bk, fgs,i}] Let $K$ be a Liouville domain. Then
    \begin{align}\label{nonrelch}
        \widetilde{c}^{HZ}(K) \leq c^S(K) \leq c^{SH}(K)
    \end{align}
\end{theorem}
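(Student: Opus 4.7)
The plan is to establish the two inequalities in (\ref{nonrelch}) separately; each is ultimately a statement about the action filtration on Floer-theoretic complexes, so the unifying technique is to track the spectral level at which the unit class appears under appropriate PSS and continuation maps.

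For the first inequality $\widetilde{c}^{HZ}(K) \leq c^S(K)$, I would take any admissible Hamiltonian $H$ from the class defining the $\pi_1$-sensitive Hofer-Zehnder capacity: autonomous, compactly supported in $\mathrm{int}(K)$, vanishing near $\partial K$, with $0 \leq H \leq c := \max H$, and with no nonconstant contractible $1$-periodic orbits. For such $H$ the Hamiltonian Floer complex on the contractible component is concentrated at the constant orbits (the critical points of $H$), whose actions are $-H(p) \in [-c,0]$. A small Morse-type perturbation identifies this Floer complex with a Morse complex, so the PSS image of the unit in $H^*(K)$ is detected at action level $-c$, giving a spectral invariant of absolute value $c = \max H$. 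Taking the supremum of $\max H$ over such admissible $H$ then produces the inequality $\widetilde{c}^{HZ}(K) \leq c^S(K)$.

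For the second inequality $c^S(K) \leq c^{SH}(K)$, I would use the natural action-filtered morphism from the Hamiltonian Floer cohomology of a compactly supported $H$ into $SH^*(K)$, built as a continuation map followed by the direct limit over admissible Hamiltonians whose slopes at infinity tend to $+\infty$. This morphism sends the PSS unit of $H$ to the unit of $SH^*(K)$ and respects action filtrations, so the spectral level of any representative of the unit at the level of $H$ is at least the spectral level at which the unit appears in $SH^*(K)$. Consequently the spectral invariant of every $H$ appearing in the definition of $c^S(K)$ is bounded above by $c^{SH}(K)$, and passing to the supremum yields the claim.

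The main obstacle is the careful bookkeeping of signs and action filtrations, which differ between references: ensuring that every arrow in the chain (PSS, Morse-to-Floer identification, continuation, direct limit) is truly action-monotone and not merely cohomology-compatible requires patient verification. A secondary subtlety is the $\pi_1$-sensitivity, which forces one to restrict all Floer-theoretic complexes and morphisms above to the contractible loop component; this changes the computation of Conley-Zehnder indices and action values in specific cases but leaves the structural outline intact.
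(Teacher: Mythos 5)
This statement is cited in the paper from the references \cite{bk, fgs, i} and is not proved there, so there is no in-paper proof to compare against. With that caveat, here is an assessment of your sketch.

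Your outline of the first inequality $\widetilde{c}^{HZ}(K) \leq c^S(K)$ has a genuine gap. You take an HZ-admissible $H$ (note: the paper's convention is $H \leq 0$, whereas you normalize $0 \leq H \leq c$; this is only a sign change, but you should match the conventions) and assert that, since there are no nonconstant contractible $1$-periodic orbits, ``a small Morse-type perturbation identifies this Floer complex with a Morse complex.'' That identification is only automatic when $H$ is $C^2$-small, which it is not here: $\max H$ is exactly the quantity being estimated, so it can be large, and Floer trajectories between constant orbits need not be Morse trajectories. The actual argument (this is the content of \cite{fgs}) tracks the family $s \mapsto \rho(1_M; sH)$ for $s \in [0,1]$, using that $sH$ also has no nonconstant contractible $1$-periodic orbits (the HZ condition forces nonconstant contractible periods $> 1$, and rescaling only lengthens periods), together with the Lipschitz property \eqref{lp} and spectrality, to pin $\rho(1_M; H)$ to the value $\min H$. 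Without that homotopy-and-spectrality step, the claim that the spectral invariant equals $-\min H$ is unsupported.

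Your sketch of $c^S(K) \leq c^{SH}(K)$ is closer in spirit, but it is too vague on the one point that actually carries the argument. You speak of a map ``sending the PSS unit of $H$ to the unit of $SH^*(K)$,'' but the capacity $c^{SH}$ in this paper is defined via the filtered map $j^L : H(K,\partial K;\Lambda) \to SH^{>L}(K;\Lambda)$ applied to the fundamental class $[K,\partial K]$, not via a unit in $SH^*(K)$ (which for a Liouville domain can well vanish). The missing link is the compatibility, at the filtered level, of $\text{PSS}^H(1)$ with $j^L([K,\partial K])$ through an isomorphism of the type $HF^{\leq 0}(\widetilde{H};\Lambda) \cong SH^{>-a}(K;\Lambda)$ for a suitably shaped extension $\widetilde{H}$ of $H$; this is precisely what the proof of Theorem \ref{mcomparison} does in the relative case (via Lemma \ref{iri}, Corollary \ref{pinch}, and the commutative diagrams \eqref{first} and \eqref{5gon}), and an analogous statement is what is proved in \cite{bk, i} in the non-relative case. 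As written, your argument does not exhibit the filtered morphism, does not say why it respects the filtrations, and conflates the quantum-cohomology unit, the class $[K,\partial K]$, and a unit in $SH$. To make this step correct you would essentially need to reproduce the framework of Theorem \ref{mcomparison} specialized to $M = \widehat{K}$.
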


 A \textbf{relative symplectic capacity} $c$ assigns to each triple $(M, K, \omega)$ for a  symplectic manifold $(M, \omega)$ and a subset $K \subset M$ a number $c(M, K, \omega) \in [0, \infty]$ that satisfies
\begin{itemize}
    \item (Monotonicity) if there exists a symplectic embedding $\phi : (M, \omega) \hookrightarrow    (M', \omega')$ such that $\text{int}(\phi(K)) \subset K'$, then $c(M, K, \omega) \leq c(M', K', \omega')$, and\\
    \item (Conformality) if $\lambda > 0$, then $c(M, K, \lambda \omega) = \lambda c(M, K, \omega)$.\\

    \end{itemize}
The symplectic capacities appearing in \eqref{nonrelch} each possess a relative counterpart, which we denote by $\widetilde{c}^{HZ}(M,K)$, $ c^S(M,K)$ and $c^{SH}(M,K)$, respectively. The actual statement in \cite{fgs} concerns the comparison $\widetilde{c}^{HZ}(M,K)$ and $ c^S(M,K)$ demonstrating the inequality:
\begin{align}\label{partineq}
    \widetilde{c}^{HZ}(M,K) \leq c^S(M,K).
\end{align}
To derive a corresponding chain of inequalities in the relative setting, we require an additional inequality connecting $c^S(M,K)$ and $c^{SH}(M,K)$.
\begin{theorem}[Theorem \ref{mcomparison}]\label{mcompint}
     Let $(M,\omega)$ be a closed symplectically aspherical symplectic manifold and $K\subset M$ be a Liouville domain with index-bounded boundary. Then
    \begin{align*}
        c^S(M,K) \leq c^{SH}(M,K)
    \end{align*}
    and therfore, together with \eqref{partineq}, 
    \begin{align*}
        \widetilde{c}^{HZ}(M,K) \leq c^S(M,K) \leq c^{SH}(M,K).
    \end{align*}
\end{theorem}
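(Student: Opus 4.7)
The plan is to reduce the inequality to a diagram chase of action-filtered $S^1$-equivariant Floer complexes, mirroring Irie's strategy in the non-relative Liouville case \cite{i}. Both $c^S(M,K)$ and $c^{SH}(M,K)$ should be read off essentially the same data: for a Hamiltonian $\widetilde{H}$ on $M \times T^*S^{2N+1}$ adapted to $K$, the spectral capacity picks out the action value of the PSS image of the fundamental class $[M]$ at the chain level in $\MD$, while $c^{SH}(M,K)$ is the infimum of $L$ such that this class vanishes in the truncated relative symplectic homology $\shn$ after passing to a cofinal family of admissible Hamiltonians. My first step is to rewrite both definitions in terms of the same PSS map, differing only by the filtration regime imposed.

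The main step is then, for any $L > c^{SH}(M,K)$, to exhibit a Hamiltonian $\widetilde{H}$ computing the spectral invariant together with a continuation diagram
\[
    H_\ast(M) \xrightarrow{\mathrm{PSS}} HF^{S^1}_{\ast}\bigl(M \times T^*S^{2N+1},\widetilde{H}\bigr) \longrightarrow SH^{S^1,-,>L}_M(K)
\]
whose composition annihilates $[M]$. A standard action estimate along the continuation, tracking the boundary contribution from the cylindrical end of $K$, will then force the spectral value of $[M]$ for $\widetilde{H}$ to be at most $L$. Taking the infimum over $L > c^{SH}(M,K)$ and the supremum over admissible $\widetilde{H}$ yields $c^S(M,K) \leq c^{SH}(M,K)$, which together with \eqref{partineq} recovers the full chain.

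The main obstacle I anticipate is verifying that these continuation and PSS maps truly respect the action filtration in the required direction, in the presence of both a closed ambient $M$ and the cylindrical stretching used to define $SH(M,K)$. This is precisely where the hypotheses are consumed: symplectic asphericity of $(M,\omega)$ kills sphere bubbling and makes the action functional single-valued on the loop space, while index-boundedness of $\partial K$ ensures that $\shn$ is computed by a genuinely filtered direct system in which the natural maps preserve $>L$-truncation. The $S^1$-equivariant enhancement on $M \times T^*S^{2N+1}$ should add no essentially new difficulty once the non-equivariant action control is in place, since one can argue page-by-page in the Morse-Bott spectral sequence associated with the approximation $T^*S^{2N+1}$ of $ES^1$.
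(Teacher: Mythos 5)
Your high-level instinct is right---the proof is a diagram chase that compares, on the same Hamiltonian data, the PSS image of the unit with the class whose vanishing controls $c^{SH}(M,K)$, and an Irie-type truncated-Floer-to-relative-$SH$ isomorphism is indeed the technical heart. However, there are two genuine problems with the proposal as written.

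First, you have imported $S^1$-equivariant machinery ($M \times T^*S^{2N+1}$, $HF^{S^1}$, $SH^{S^1,-,>L}_M(K)$) into a theorem where none of it belongs. The capacity $c^{SH}(M,K)$ is defined through the non-equivariant map $j^L : H(K,\partial K;\Lambda) \to SH^{>L}_M(K;\Lambda)$ and the vanishing of $j^L([K,\partial K])$; the $S^1$-equivariant / Gysin apparatus is what one needs for $c^{GH}_1(M,K)$, which is the subject of the other main theorem, not this one. Likewise the spectral invariant $\rho(1_M;H)$ is built from the ordinary PSS map $QH(M;\Lambda)\to HF(H;\Lambda)$, with no $T^*S^{2N+1}$ thickening. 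Carrying the equivariant structure along would complicate, not simplify, every action estimate you intend to make, and none of the required maps are even between the right cohomology groups.

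Second, and more importantly, the proposal is missing the bridge between $1_M \in QH^0(M;\Lambda)$ and $[K,\partial K] \in H_{2n}(K,\partial K;\Lambda)$. The spectral capacity sees the former (PSS image of the unit of quantum cohomology of the closed ambient manifold); the symplectic (co)homology capacity sees the latter (vanishing of the relative fundamental class under $j^L$). These live in different groups, and nothing in your sketch explains why the filtration argument for one implies anything for the other. The paper supplies this bridge by constructing a Morse function on $M$ with a single critical point of index $0$ on $K$ and no gradient flow lines from outside $K$ into $K$; this yields a map $\rho : H^*(M;\Lambda) \to H_{2n-*}(K,\partial K;\Lambda)$ sending $1_M \mapsto [K,\partial K]$, which is then fitted into a commutative diagram with the Irie-type isomorphism $HF^{\leq 0}(\widetilde H;\Lambda)\cong SH^{>-a}_M(K;\Lambda)$ and the truncation/PSS maps. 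Without some version of this map (or an equivalent Lefschetz-duality argument) the diagram chase cannot even be set up, so the argument as proposed would not close.
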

A consequence of Theorem \ref{mcompint} concerns the ``almost existence theorem". In \cite{gg}, Ginzburg and G{\"u}rel proved the relative almost existence theorem.

\begin{theorem}[\cite{gg}]
    Let $(M,\omega)$ be a symplectically aspherical closed symplectic manifold and $K \subset M$ be a compact nonempty subset. Let $H : M \to \RR$ be a proper smooth function on $M$ such that $H|_K = \min H$. If $c^{HZ}(M,K)$ is finite, then for almost all (in the sense of measure theory) regular values $c$ in the range of $H$, the level set $H^{-1}(c)$ carries a closed characteristic. 
\end{theorem}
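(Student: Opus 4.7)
The plan is to adapt the classical Hofer-Zehnder-Struwe argument to the relative setting. The strategy converts the question about closed characteristics of $H$ on level sets into the existence of fast periodic orbits of auxiliary Hamiltonians of the form $f \circ H$, and invokes the finiteness of $c^{HZ}(M,K)$ to force such orbits to exist.

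The elementary but crucial observation is that for $F = f \circ H$ with $f : \RR \to \RR$ smooth, the Hamiltonian vector field is $X_F = (f' \circ H) \cdot X_H$; hence a periodic orbit of $F$ of period $T$ lying on the level $\{H = s\}$ is precisely a closed characteristic of $H$ on $H^{-1}(s)$ with period $T \cdot |f'(s)|^{-1}$. For each regular value $c$ of $H$ and a small $\delta > 0$, I would choose $f_{c,\delta}$ vanishing on $(-\infty, \min H]$, equal to a large constant $L$ on $[c+\delta, \infty)$, and steeply increasing on $(c-\delta, c+\delta)$. The composition $F_{c,\delta} = f_{c,\delta} \circ H$ is then a smooth function on $M$ attaining its minimum on $K$ (since $H|_K = \min H$), constant outside the compact set $H^{-1}([\min H, c+\delta])$, and with oscillation $L$. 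For $L > c^{HZ}(M,K)$, the function $F_{c,\delta}$ fails the admissibility condition defining $c^{HZ}(M,K)$ and must therefore admit a non-constant periodic orbit of period at most $1$; by the observation above, this orbit corresponds to a closed characteristic of $H$ on some level $H^{-1}(s_{c,\delta})$ with $s_{c,\delta} \in (c-\delta, c+\delta)$.

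To upgrade ``closed characteristic near the level $c$'' to ``closed characteristic on $H^{-1}(c)$ for almost every regular $c$'', define $\sigma(s) := \inf\{T > 0 : H^{-1}(s) \text{ carries a closed characteristic of period } T\}$ and show the set $\{c : \sigma(c) = \infty\}$ has Lebesgue measure zero. Letting $\delta \to 0$ and combining Sard's theorem with a Fatou-type measure argument applied to the family $\{s_{c,\delta}\}$ should deliver this conclusion. The main obstacle will be exactly this concentration step, because the period bound obtained in the preceding step depends on $|f'_{c,\delta}(s_{c,\delta})|$, which blows up as $\delta \to 0$; controlling this blow-up uniformly in $s$ is the original Struwe technical insight and the heart of the argument. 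Symplectic asphericity of $M$ enters to ensure the Hamiltonian dynamics framework underlying $c^{HZ}(M,K)$ (and its admissibility condition) is well-behaved, for instance ruling out bubbling phenomena that would otherwise obstruct locating the orbit on the desired shell.
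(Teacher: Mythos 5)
The paper does not actually prove this theorem; it is quoted as a black box from Ginzburg--G\"urel \cite{gg}, so the comparison here is against the standard Hofer--Zehnder--Struwe argument which \cite{gg} adapts to the relative setting. Your outline captures that argument's skeleton correctly: the reduction $X_{f\circ H}=(f'\circ H)\,X_H$, the construction of steep cut-off functions $f_{c,\delta}$ whose oscillation exceeds $c^{HZ}(M,K)$, and the identification of the resulting fast orbit with a closed characteristic on a nearby level. Two small normalization slips: with the conventions in the paper, (B1) requires the test Hamiltonian to be \emph{nonpositive}, so $f_{c,\delta}$ should run from $-L$ up to $0$ rather than from $0$ up to $L$; and (B2) requires $F_{c,\delta}$ to equal its minimum on an \emph{open} neighborhood of $K$, so $f_{c,\delta}$ must be constant on $(-\infty,\min H+\epsilon]$ for some $\epsilon>0$, not merely on $(-\infty,\min H]$ (which would give constancy only on the closed set $H^{-1}(\min H)$).

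The genuine gap is exactly the step you flag as ``the heart of the argument,'' and your proposed tools for it are not the right ones. The quantity $\sigma(s)=\inf\{T>0: H^{-1}(s)\text{ carries a closed characteristic of period }T\}$ is what you want to \emph{bound}, not what drives the measure theory; and neither Sard's theorem nor a Fatou-type estimate supplies the required uniform period control as $\delta\to 0$. What actually makes the Struwe concentration work is the monotonicity of the function
\begin{align*}
\phi(c) \;=\; c^{HZ}\bigl(M,\,H^{-1}((-\infty,c])\bigr),
\end{align*}
which is nondecreasing in $c$ and bounded above by $c^{HZ}(M,K')$ once $H^{-1}((-\infty,c])$ sits inside a fixed compact shell; by the Lebesgue differentiation theorem $\phi$ has a finite derivative at almost every $c$, and it is precisely the finiteness of $\phi'(c)$ that yields a sequence of admissible Hamiltonians $F_{c,\delta_n}$ with $\delta_n\to 0$ whose nonconstant orbits have periods bounded uniformly in $n$. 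A compactness/Arzel\`a--Ascoli argument then passes to the limit and produces a genuine closed characteristic on $H^{-1}(c)$ itself. Without identifying this monotone function and invoking Lebesgue differentiation rather than Fatou, the concentration step does not close, so as written the proof has a real hole at its center even though the surrounding scaffolding is sound.

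Finally, your remark on symplectic asphericity is slightly off target: bubbling is not the issue here. In \cite{gg}, asphericity is what makes the spectral-invariant machinery underlying the finiteness criterion for $c^{HZ}(M,K)$ available; in the present theorem $c^{HZ}(M,K)<\infty$ is taken as a hypothesis, so asphericity is inherited from the framework in which the capacity is defined rather than being invoked to control holomorphic curves inside the Struwe argument.
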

In view of relative almost existence theorem and Theorem \ref{mcompint}, the finiteness of $c^{SH}(M,K)$ implies the same conclusion. In fact, stronger conclusions can be attainable with the aid of following statements.

\begin{theorem}[\cite{msv}]\label{heavy} Let $(M, \omega)$ be a closed symplectic manifold and $K \subset M$ be a compact subset. Then the subset $K$ is heavy if and only if $SH_M(K;\Lambda) \neq 0$. 
   
\end{theorem}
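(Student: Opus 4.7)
The plan is to leverage the PSS-type map from quantum cohomology $QH^*(M;\Lambda)$ to relative symplectic cohomology $SH_M(K;\Lambda)$ that sends the unit to the unit, and to relate survival of the unit under acceleration to the spectral-invariant inequality that defines heaviness. Hamiltonian Floer theory on $(M,\omega)$ is unobstructed in the symplectically aspherical or monotone setting assumed in \cite{msv}, so $HF^*(H;\Lambda)\cong QH^*(M;\Lambda)$ via PSS for any admissible $H$.

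First I would recall Varolgunes' construction: choose a cofinal family of Hamiltonians $H_i$ that are $\leq 0$ on a neighborhood of $K$ and diverge to $+\infty$ on $M\setminus K$, form the telescope of the associated Floer complexes over $\Lambda_{\geq 0}$, and then complete with respect to the adic filtration. The PSS map factors through each $HF^*(H_i)$ and descends to a unital ring map $QH^*(M;\Lambda)\to SH_M(K;\Lambda)$ sending $1$ to a canonical class $1_K$. Since $SH_M(K;\Lambda)$ is a unital $\Lambda$-algebra, non-vanishing is equivalent to $1_K\neq 0$.

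For the implication $SH_M(K;\Lambda)\neq 0\Rightarrow K$ heavy, I would argue that persistence of $1_K$ through every acceleration stage, combined with the action filtration on Hamiltonian Floer cohomology, forces the estimate $\rho(1;H)\geq \min_K H$ for every autonomous $H$, which is the Entov-Polterovich criterion for heaviness. Concretely, one compares a given $H$ with the cofinal sequence via continuation maps and tracks the induced action shifts; non-vanishing of $1_K$ prevents the image of $1$ from being killed below the threshold $\min_K H$. For the converse I argue contrapositively: if $1_K=0$ in the telescope, one localizes the vanishing to a finite stage and a specific action window, producing an explicit Hamiltonian compactly supported away from $K$ whose spectral invariant violates heaviness.

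The main obstacle will be the completion inherent in the telescope: $SH_M(K;\Lambda)$ is defined as a derived colimit over $\Lambda_{\geq 0}$, not as a naive direct limit, so ``$1_K=0$'' does not immediately translate to vanishing at a finite stage. Converting limiting information into a quantitative stagewise bound requires careful non-archimedean bookkeeping together with Mittag-Leffler-type arguments or barcode/boundary-depth analysis, and this is where the technical weight of \cite{msv} lies. Once this machinery is in place, the two implications become complementary readings of the PSS map through action filtrations.
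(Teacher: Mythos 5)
The paper does not prove this statement; it quotes it from Mak--Sun--Varolgunes \cite{msv}, so there is no in-paper argument to compare against. Your outline does capture the top-level strategy of \cite{msv}: a unit (PSS-type) map $QH^*(M;\Lambda)\to SH_M(K;\Lambda)$, reduction of non-vanishing to survival of the unit class, and a translation between that survival along the acceleration data and spectral-invariant estimates.

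Still, several points would need to be fixed before this could serve even as a proof sketch. The spectral inequality you aim for, $\rho(1;H)\geq\min_K H$, is the Entov--Polterovich form and does not match the definition of heaviness used in this paper, namely $\rho(a;H)\leq\max_K H$; these are not interchanged simply by $H\mapsto -H$, since $\rho(a;-H)$ is not $-\rho(a;H)$, and a sloppy translation here conflates heaviness with superheaviness. You also restrict to the symplectically aspherical or monotone case so that Floer theory is classically unobstructed, but the statement is for an arbitrary closed symplectic manifold, and both this paper and \cite{msv} rely on Pardon's virtual fundamental cycles precisely because no such restriction is assumed. Most substantively, you correctly identify the completion of the telescope as the crux, but your sketch stops at naming the difficulty: the condition ``$1_K=0$ in the completed complex'' does not localize to a finite acceleration stage, and converting that qualitative vanishing into quantitative spectral bounds (and establishing the harder direction, heavy $\Rightarrow SH_M(K;\Lambda)\neq 0$) is the genuine technical content of \cite{msv}, not a routine Mittag-Leffler argument. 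For these reasons, simply citing \cite{msv}, as the paper does, is the right move.
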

In Theorem \ref{heavy}, the notion of \textit{heaviness} is a type of symplectic rigidity introduced by Entov and Polterovich in \cite{ep}. The precise definition will be provided in \S 2.
\begin{theorem}[\cite{a}]
    Let $(M, \omega)$ be a closed symplectically aspherical symplectic manifold and $K \subset M$ be a Liouville domain with index-bounded boundary. If $SH_M(K;\Lambda) = 0$, then the relative symplectic (co)homology capacity $c^{SH}(M,K)$ is finite.
\end{theorem}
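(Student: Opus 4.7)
The plan is to realize $c^{SH}(M,K)$ as a spectral invariant of a distinguished PSS-type class $\mathbf{1} \in SH_M(K;\Lambda)$ and then to show that the vanishing of the whole module forces this spectral invariant to be finite. The hypotheses on $K$ will be used to translate ``vanishing in the colimit'' into ``vanishing at a finite action.''

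First I would set up the chain-level model. Following Varolgunes, and using closedness and symplectic asphericity of $M$ together with index-boundedness of $\partial K$, the relative symplectic homology $SH_M(K;\Lambda)$ is computed as a Novikov-completed telescope of Hamiltonian Floer complexes $CF(H_n;\Lambda)$ for an acceleration family $\{H_n\}$ of admissible Hamiltonians, $C^2$-small on $K$ and linear in the symplectization coordinate of $\partial K$ with slopes tending to infinity. Index-boundedness ensures that each action window contains finitely many generators and that continuation maps are well-defined over $\Lambda$. The complex carries a distinguished PSS-type unit $\mathbf{1} \in SH_M(K;\Lambda)$, and the capacity will be expressible in the spectral form
\begin{equation*}
    c^{SH}(M,K) \;=\; \inf\bigl\{\, L > 0 \;:\; \mathbf{1} \in \mathrm{Im}\bigl( SH_M^{<L}(K;\Lambda) \to SH_M(K;\Lambda) \bigr) \,\bigr\}.
\end{equation*}

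Second, I would invoke the hypothesis. Since $SH_M(K;\Lambda) = 0$, the class $\mathbf{1}$ vanishes in the colimit, which at the chain level provides a primitive $b$ with $\partial b = \widehat{\mathbf{1}}$ in the completed complex, where $\widehat{\mathbf{1}}$ is a finite-stage chain representative of $\mathbf{1}$. The tameness of the action filtration (from index-boundedness) will let us replace $b$ by a finite-action primitive, since each action window is finite-dimensional; hence $\widehat{\mathbf{1}}$ is already a boundary in $SH_M^{<L}(K;\Lambda)$ for some $L$ exceeding the action of $b$, yielding $c^{SH}(M,K) \leq L < \infty$.

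The principal technical obstacle will lie precisely in this Novikov truncation: over $\Lambda$ a bounding cochain can a priori be a formal sum with decreasing but possibly unbounded action. Overcoming this requires combining index-boundedness (finite generation per action window) with the telescope structure (every homology class admits a finite-stage representative), which together ensure that the persistence-module barcode of $\mathbf{1}$ consists of finite-length bars whenever the total homology vanishes. This is the step where the Liouville and index-bounded hypotheses do the real work.
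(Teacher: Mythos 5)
The paper itself does not reprove this statement; it quotes it directly from \cite{a}, so there is no in-text proof to compare against. Evaluating your proposal on its own terms, there are two serious problems. First, the ``spectral form'' you write down for $c^{SH}(M,K)$ does not match the definition in use and in fact collapses under the hypothesis. The paper defines $c^{SH}(M,K) = -\sup\{L<0 : j^L([K,\partial K]) = 0\}$, where $j^L : H(K,\partial K;\Lambda) \to SH^{>L}_M(K;\Lambda)$ comes from the exact triangle of Theorem \ref{ing} and $[K,\partial K]$ lives in ordinary relative homology $H_{2n}(K,\partial K;\Lambda)$, which is never zero. Your formula instead asks when a class $\mathbf{1} \in SH_M(K;\Lambda)$ lies in the image of a filtered piece mapping into $SH_M(K;\Lambda)$; but if $SH_M(K;\Lambda)=0$ then $\mathbf{1}=0$ and lies trivially in every image, so your formula would return $0$ unconditionally, rendering the theorem an empty tautology --- which it is not. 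The correct setup must keep $[K,\partial K]$ on the $H(K,\partial K;\Lambda)$ side and use the exact triangle $SH^{-,>L}_M(K;\Lambda) \to H(K,\partial K;\Lambda) \xrightarrow{j^L} SH^{>L}_M(K;\Lambda)$, reducing the claim to showing that when $SH_M(K;\Lambda)$ vanishes, $[K,\partial K]$ already lies in the image of $SH^{-,>L}_M(K;\Lambda)$ for some finite $L$.

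Second, and independently, the step you flag as the ``principal technical obstacle'' is then asserted rather than argued. Saying that index-boundedness and the telescope structure ``ensure that the persistence-module barcode of $\mathbf{1}$ consists of finite-length bars whenever the total homology vanishes'' is a restatement of the conclusion, not a proof of it. Finite rank in each action window is not by itself enough: the completion in the definition of $SH_M(K)$ is taken with respect to the Novikov filtration, not the action filtration, so a chain-level primitive can a priori be spread over generators of unbounded negative action, and nothing you have said rules that out. What must actually be established is that, in the index-bounded case, the colimit over $L$ of the filtered cohomologies computes $SH_M(K;\Lambda)$ honestly in the relevant degree (equivalently, that the action colimit passes through the Novikov completion), so that vanishing in the colimit forces vanishing at a finite filtration stage. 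As written, the argument has a genuine gap at precisely the point where the Liouville and index-boundedness hypotheses are supposed to do their work.
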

A criterion for the relative almost existence theorem is given below, based on the notion of heaviness.
\begin{corollary}
    Let $(M,\omega)$ be a closed symplectically aspherical symplectic manifold and $K\subset M$ be a Liouville domain with index-bounded boundary. Let $H : M \to \RR$ be a proper smooth function on $M$ such that $H|_K = \min H$. If $K$ is not heavy, then for almost all regular values $c$ in the range of $H$, the level set $H^{-1}(c)$ carries a closed characteristic.  \\ \qed
\end{corollary}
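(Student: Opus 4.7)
The plan is to assemble the corollary as a direct consequence of the four preceding displayed statements by chaining the implications: non-heaviness of $K$ forces $SH_M(K;\Lambda) = 0$, which forces $c^{SH}(M,K)$ to be finite, which forces $c^{HZ}(M,K)$ to be finite, which in turn triggers the relative almost existence theorem. The task is essentially to check that the hypotheses at each step match the standing hypotheses of the corollary.

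First I would invoke Theorem \ref{heavy}: since $K$ is not heavy, the stated biconditional yields $SH_M(K;\Lambda) = 0$ (compactness of $K$ is automatic since a Liouville domain is compact). Next, the result of \cite{a} applies verbatim, as its hypotheses---closed symplectically aspherical $(M,\omega)$ together with a Liouville domain $K$ with index-bounded boundary---are exactly the ones assumed in the corollary, so $c^{SH}(M,K)$ is finite. Then Theorem \ref{mcompint}, whose hypotheses again coincide with ours, gives
\begin{align*}
    \widetilde{c}^{HZ}(M,K) \leq c^S(M,K) \leq c^{SH}(M,K) < \infty.
\end{align*}
Because the ordinary Hofer-Zehnder capacity is dominated by its $\pi_1$-sensitive refinement, namely $c^{HZ}(M,K) \leq \widetilde{c}^{HZ}(M,K)$, it follows that $c^{HZ}(M,K) < \infty$. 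The relative almost existence theorem of Ginzburg-G{\"u}rel then applies directly---symplectic asphericity, $H|_K = \min H$, and properness of $H$ are all given hypotheses---and delivers the required closed characteristic on $H^{-1}(c)$ for almost every regular value $c$.

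The only real obstacle is bookkeeping: one must confirm that the standing hypotheses of the corollary feed correctly into each of the four cited black boxes, and that the conventions relating the $\pi_1$-sensitive and $\pi_1$-insensitive Hofer-Zehnder capacities line up in the direction needed. Both checks are essentially immediate, so I do not anticipate any genuine technical difficulty; the substantive content of the corollary is entirely carried by its four inputs, with Theorem \ref{mcompint} serving as the newly established link in the chain.
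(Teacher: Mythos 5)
Your proof is correct and matches the paper's intended argument exactly; the corollary is stated with a bare \qed because it is precisely this chain of implications through Theorem \ref{heavy}, the result of \cite{a}, Theorem \ref{mcompint}, the inequality $c^{HZ}(M,K) \leq \widetilde{c}^{HZ}(M,K)$, and the Ginzburg--G\"urel relative almost existence theorem. The hypothesis-matching you carry out at each step is the entire content of the verification.
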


Our attention now shifts to an equality, in particular, between the first Gutt-Hutchings capacity $c^{GH}_1(M,K)$ and the symplectic (co)homology capacity $c^{SH}(M,K)$. It is straightforward from the definition that $$c_1^{GH}(M,K) \leq c^{SH}(M,K).$$ Recall that a $(2n-1)$-dimensional contact manifold $(C, \xi, \alpha)$ with $c_1(\xi) = 0$ is called \textbf{dynamically convex} if every Reeb orbit $\gamma$ on $(C, \alpha)$ satisfies $\mu_{\text{CZ}}(\gamma) \geq n+1$, where  $\mu_{\text{CZ}}(\gamma)$ denotes the Conley-Zehnder index of $\gamma$. Under this convexity assumption, the reverse inequality also holds, leading to the following conclusion.

\begin{theorem}[\cite{a}]\label{dycon}
    Let $(M,\omega)$ be a symplectically aspherical closed symplectic manifold and $K \subset M$ be a Liouville domain with index-bounded boundary. Let $\alpha$ be the canonical contact form on $\partial K$. If $(\partial K, \alpha)$ is dynamically convex, then
    \begin{align*}
        c_1^{GH}(M,K) = c^{SH}(M,K).
    \end{align*}
\end{theorem}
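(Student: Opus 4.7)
The inequality $c_1^{GH}(M,K) \leq c^{SH}(M,K)$ is already noted in the excerpt, so the task is to establish the reverse inequality under the dynamical convexity assumption. My plan is to reduce both capacities to spectral invariants of the unit class in filtered relative (equivariant) symplectic cohomology, and then to exploit the degree constraint $\cz(\gamma) \geq n+1$ to force these spectral values to coincide.

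First I would realize $c^{SH}(M,K)$ as the spectral value of the fundamental class $[1]$ in the filtered relative symplectic cohomology $SH_M^{\leq L}(K)$, and $c_1^{GH}(M,K)$ as the analogous spectral value of $[1]$ in the filtered $S^1$-equivariant relative SH. Index-boundedness of $\partial K$ and symplectic asphericity of $M$ guarantee that these filtered theories are well-defined, that continuation maps respect the action filtration, and that the Gysin-type long exact sequence relating equivariant and non-equivariant relative SH is available in each action window.

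The crucial step is to show that dynamical convexity forces the two spectral invariants to agree. Under the standard conventions, a Reeb orbit $\gamma$ on $\partial K$ with $\cz(\gamma) \geq n+1$ contributes generators to the positive cone of (equivariant) SH in degrees $\geq 1$, while $[1]$ sits in degree $0$. Consequently, in the Gysin sequence the fundamental class has a canonical equivariant lift, and the boundary operator that could annihilate $[1]$ at a given action level in the non-equivariant theory is visible through its equivariant counterpart at the same action. A diagram chase then yields $c^{SH}(M,K) \leq c_1^{GH}(M,K)$.

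The main obstacle is degree bookkeeping in the relative setting: in the Liouville (non-relative) Gutt--Hutchings argument one localizes near orbits on $\partial K$, but here Hamiltonian orbits in $M \setminus K$ also enter the relative complex. These must either be pushed above the relevant action window by choosing Hamiltonians of sufficiently large slope (as permitted by index-boundedness), or be shown to sit in degrees that do not interfere with the spectral invariant of $[1]$. Symplectic asphericity rules out sphere bubbling, so the comparison is otherwise formally analogous to the Liouville case; the substantive technical work lies in verifying that the relative Gysin sequence and degree conventions behave as in the Liouville setting for Hamiltonians adapted to the pair $(M,K)$.
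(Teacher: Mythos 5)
Your proposal takes essentially the same route as the paper's proof of the stronger Theorem~\ref{slight} (which replaces $\mu_{\text{CZ}}(\gamma)\geq n+1$ by $\mu_{\text{CZ}}(\gamma)\geq n$): combine the Gysin-type exact triangle of Theorem~\ref{rbo} with the degree constraint that dynamical convexity forces on nonconstant orbits, then run a diagram chase through the diagram built from Theorem~\ref{ing} to upgrade vanishing of $j^{S^1,L}(1\otimes[K,\partial K])$ to vanishing of $j^{L}([K,\partial K])$. The obstacle you flag about Hamiltonian orbits in $M\setminus K$ is resolved in the paper precisely by working with the negative part $SH^{-}$ in Lemma~\ref{inj}(a), which sees only nonconstant collar orbits and therefore vanishes in the degree needed to make the relevant Gysin map surjective.
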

In this paper, we prove the same result given in Theorem \ref{dycon} under a less restrictive assumption on the Conley-Zehnder indices.
\begin{theorem}[Theorem \ref{slight}]\label{czint}
     Let $(M,\omega)$ be a $2n$-dimensional symplectically aspherical closed symplectic manifold and $K\subset M$ be a Liouville domain with index-bounded boundary. If every contractible Reeb orbit $\gamma$ of $(\partial K, \alpha)$ satisfies $\mu_{\text{CZ}} (\gamma) \geq n$, then
     \begin{align*}
          c_1^{GH}(M,K) = c^{SH}(M,K).
     \end{align*}
\end{theorem}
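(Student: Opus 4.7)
The inequality $c_1^{GH}(M,K) \leq c^{SH}(M,K)$ is already noted in the introduction as immediate from the definitions, so the plan is to establish the reverse inequality $c^{SH}(M,K) \leq c_1^{GH}(M,K)$ under the weakened Conley--Zehnder hypothesis. The overall strategy parallels the proof of Theorem \ref{dycon}, but the Morse--Bott analysis must be refined to accommodate the borderline index $\mu_{\text{CZ}}(\gamma)=n$.

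First I would work in the filtered Morse--Bott model for positive $S^1$-equivariant symplectic cohomology relative to $M$. Each non-constant contractible Reeb orbit $\gamma$ of $(\partial K,\alpha)$ contributes a pair of generators $\check{\gamma},\widehat{\gamma}$ whose cohomological degrees are determined by $\mu_{\text{CZ}}(\gamma)$ and differ by one. In this model the two capacities appear as action thresholds for two specific classes: $c_1^{GH}(M,K)$ as the minimal action at which a representative of the first Gutt--Hutchings class $e_1$ appears in the positive $S^1$-equivariant symplectic cohomology $SH^{S^{1},+,*}_{M}(K)$, and $c^{SH}(M,K)$ as the minimal action at which the image of the PSS unit appears in $SH^{*}_{M}(K)$.

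The key machinery is the Gysin-type long exact sequence relating equivariant and non-equivariant positive symplectic cohomology, together with the tautological exact sequence of the pair $(K,\partial K)$; these identifications already underlie the proof of Theorem \ref{dycon}. Under dynamical convexity the stronger bound $\mu_{\text{CZ}}(\gamma)\geq n+1$ forces every Morse--Bott generator into a cohomological degree that prevents spurious low-action representatives, so that the degree-by-degree comparison of $e_1$ and the PSS unit is essentially automatic. I would extend this argument to the weaker bound $\mu_{\text{CZ}}(\gamma)\geq n$ by permitting generators in exactly one additional critical degree and checking that they nevertheless fail to produce alternative representatives of $e_1$ at action levels below $c^{SH}(M,K)$.

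The hard part will be precisely these borderline generators with $\mu_{\text{CZ}}(\gamma)=n$, since they populate the degree in which the two capacities are compared and are the only novelty relative to the dynamically convex case. I plan to eliminate the possibility that they contribute lower-action representatives of $e_1$ by using the $\QQ[u]$-module structure of the positive $S^1$-equivariant theory, where the $u$-action shifts cohomological degree by two and $e_1$ is caught by this action. Any candidate representative stemming from an index-$n$ orbit must therefore be consistent with the $u$-action, and a direct inspection of the Morse--Bott differential---made tractable by the index-boundedness of $\partial K$, which controls the orbits of bounded action---should show that such borderline generators either cancel in homology or lie in the kernel of the Gysin connecting map to non-equivariant cohomology. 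Either outcome forces the minimal-action representative of $e_1$ to correspond, via the connecting homomorphism, to a representative of the PSS unit at the same action, yielding $c^{SH}(M,K)\leq c_1^{GH}(M,K)$ and completing the proof.
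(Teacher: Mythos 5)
Your proposal correctly identifies the main tools (the Gysin-type exact triangle relating equivariant and non-equivariant relative symplectic cohomology, the $\Lambda_{\geq 0}[u]$-module structure, and the comparison with the tautological exact sequence of $(K,\partial K)$), and you correctly observe that the bound $\mu_{\text{CZ}}(\gamma)\geq n$ permits generators in exactly one extra degree compared to dynamical convexity. However, the mechanism you propose for handling those borderline generators is not what actually makes the proof work, and as sketched it has a genuine gap.

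The paper's proof rests on Lemma \ref{inj}(a): the Gysin map
$\zeta^{L,-n-1}:SH^{-,>L,-n-1}_M(K;\Lambda)\to SH^{S^1,-,>L,-n-1}_M(K;\Lambda)$
is surjective because $SH^{-,>L,-n+1}_M(K;\Lambda)=0$, and this vanishing holds at the chain level: a generator $u^\ell\otimes x$ of $CF^{S^1,-,>L}_w(H)$ in degree $-n+1$ must satisfy $\mu_{\text{CZ}}(x)=-n+2\ell+1$, while any nonconstant admissible Hamiltonian orbit traverses a Reeb orbit in the reverse direction, so $\mu_{\text{CZ}}(\gamma)\geq n$ gives $\mu_{\text{CZ}}(x)\leq -n$, forcing $2\ell+1\leq 0$, impossible for $\ell\geq 0$. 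No generators exist, so there is nothing to cancel. Your plan to show that the index-$n$ orbits "either cancel in homology or lie in the kernel of the Gysin connecting map" by inspecting the Morse--Bott differential is a red herring: no differential analysis is needed, there is no a priori reason for such cancellations, and index-boundedness plays no role in this particular degree count (it is only a standing hypothesis for setting up the relative theories). Once the surjectivity of $\zeta^{L,-n-1}$ is in hand, the paper concludes by an explicit diagram chase through the commuting ladder of exact sequences in Theorem \ref{ing}: given $j^{S^1,L}(1\otimes[K,\partial K])=0$, exactness gives a preimage $a$ in $SH^{S^1,-,>L,-n-1}_M(K;\Lambda)$, surjectivity of $\zeta^{L,-n-1}$ lifts it to $a'\in SH^{-,>L,-n-1}_M(K;\Lambda)$, and commutativity plus exactness of the top row gives $j^L([K,\partial K])=0$. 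Replacing your Morse--Bott cancellation heuristic with this degree-counting vanishing and diagram chase would close the gap.
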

    Indeed, in Theorem \ref{mcompint}, Theorem \ref{dycon} and Theorem \ref{czint}, the constraint that $(M, \omega)$ is a closed symplectic manifold can be relaxed. For a Liouville domain $K$, let $\widehat{K}$ be the symplectic completion of $K$. Then $c^S(\widehat{K},K)$, $c^{GH}_1(\widehat{K},K)$ and $c^{SH}(\widehat{K},K)$ still make sense and they recover their non-relative counterparts, that is, 
    \begin{align*}
        c^S(\widehat{K},K) = c^S(K), \,\,c^{GH}_1(\widehat{K},K) =  c^{GH}_1(K)\,\,\text{and}\,\,c^{SH}(\widehat{K},K) = c^{SH}(K).
    \end{align*}
    Background of this extension can be found in \cite{sun}. Moreover, we can remove the index-boundedness assumption on $\partial K$ in the non-relative case essentially because Hamiltonian functions in the non-relative case are chosen to be linear at infinity. 

\begin{corollary}[Corollary \ref{nonrela}]\label{star}
   Let $K$ be a $2n$-dimensional Liouville domain with $c_1(TK)=0$. If every contractible Reeb orbit $\gamma$ of $(\partial K, \alpha)$ satisfies $\mu_{\text{CZ}} (\gamma) \geq n$, then
    \begin{align}\label{new}
        c_1^{GH}(K) = c^{SH}(K). 
    \end{align}
\end{corollary}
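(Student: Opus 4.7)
The plan is to deduce this corollary from Theorem \ref{czint} by taking the ambient manifold to be the symplectic completion $\widehat{K}$ of $K$, invoking the two relaxations announced in the paragraph preceding the statement: that closedness of $M$ may be replaced by working on $\widehat{K}$, and that index-boundedness of $\partial K$ is unnecessary in the non-relative setting.

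First I would verify the hypotheses of Theorem \ref{czint} in this extended framework. The completion $\widehat{K}$ is an exact symplectic manifold, so it is automatically symplectically aspherical. The condition $c_1(TK) = 0$ ensures that the Conley--Zehnder indices $\mu_{\text{CZ}}(\gamma)$ are well-defined integers without ambiguity, and the Reeb dynamics on $\partial K$ inside $\widehat{K}$ are identical to those on the abstract contact boundary, so the assumption $\mu_{\text{CZ}}(\gamma) \geq n$ for every contractible Reeb orbit transfers verbatim. Then I would appeal to the identifications recalled in the paragraph before the corollary (following \cite{sun}),
\begin{align*}
    c_1^{GH}(\widehat{K},K) = c_1^{GH}(K), \qquad c^{SH}(\widehat{K},K) = c^{SH}(K),
\end{align*}
so that an instance of Theorem \ref{czint} for the pair $(\widehat{K},K)$ immediately yields \eqref{new}. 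The inequality $c_1^{GH}(K) \leq c^{SH}(K)$ is built into the definitions, so only the reverse direction $c^{SH}(K) \leq c_1^{GH}(K)$ requires work.

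The main obstacle, and the only place where the proof of Theorem \ref{czint} cannot be quoted as a black box, is the removal of the index-boundedness hypothesis on $\partial K$. In the closed relative setting this hypothesis is used to keep Floer trajectories of admissible Hamiltonians (which must be bounded on $M$) in a compact piece of phase space and to guarantee convergence of the relevant action-truncated limits. In the non-relative case on $\widehat{K}$, the analogous role is played by the standard choice of Hamiltonians that are linear at infinity with slope not equal to any Reeb period: the integrated maximum principle then confines Floer cylinders to a compact subset of $\widehat{K}$ independently of any index bound, and the same persistence-module / filtration comparison carried out in Theorem \ref{czint} between the symplectic (co)homology spectral invariants and the $S^1$-equivariant positive complex computing $c_1^{GH}$ goes through. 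Running that comparison under the hypothesis $\mu_{\text{CZ}}(\gamma) \geq n$ produces, exactly as in the proof of Theorem \ref{czint}, the matching upper bound $c^{SH}(K) \leq c_1^{GH}(K)$, completing the proof.
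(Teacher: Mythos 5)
Your proposal is correct and follows essentially the same route as the paper: pass to the completion $\widehat{K}$, invoke the identifications $c_1^{GH}(\widehat{K},K)=c_1^{GH}(K)$ and $c^{SH}(\widehat{K},K)=c^{SH}(K)$ from \cite{sun}, note that $c_1(TK)=0$ yields well-defined Conley–Zehnder indices on $\partial K$, and observe that the index-boundedness hypothesis becomes superfluous because the admissible Hamiltonians are linear at infinity so the arguments of Lemma~\ref{inj} and Theorem~\ref{slight} (via the Bourgeois–Oancea Gysin triangle for $SH(K;\Lambda)$ and $SH^{S^1}(K;\Lambda)$) carry over verbatim with the ambient subscript dropped.
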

\begin{remark}
\begin{enumerate}[label=(\alph*)]
    \item A \textbf{star-shaped domain} $K$ in  $\RR^{2n}$ is a compact $2n$-dimensional submanifold such that the boundary $\partial K$ is transverse to the radial vector field
\begin{align*}
    X = \frac{1}{2}\sum_{i=1}^n \left( x_i \frac{\partial}{\partial x_i} + y_i \frac{\partial }{\partial y_i} \right)
\end{align*}
where $(x_1,\cdots,x_n,y_1,\cdots,y_n)$ are the coordinates on $\RR^{2n}$. Note that every star-shaped domain is a Liouville domain. In this case, the 1-form
\begin{align*}
    \frac{1}{2}\sum_{i=1}^n (x_i dy_i - y_i dx_i)
\end{align*}
restricted to $\partial K$ is a contact form on $\partial K$. So, Corollary \ref{star} can be restated as: For a star-shaped domain $K \subset \RR^{2n}$ with smooth boundary, if every contractible Reeb orbit on $\partial K$ has Conley-Zehnder index greater than or equal to $n$, then $ c_1^{GH}(K) = c^{SH}(K)$.
\item For a convex star-shaped domain $K \subset \RR^{2n}$ with smooth boundary, the agreement \eqref{new} of two symplectic capacities is known and the proof is a combination of many papers: Let $\mathcal{A}_{\min}(\partial K)$ be the minimal period of Reeb orbits of $\partial K$. It is proved in \cite{gh} and \cite{gs} that $$ c_1^{GH}(K) = \mathcal{A}_{\min}(\partial K).$$ Also, it is proved in \cite{ak} and \cite{i22} that $$ c^{SH}(K) = \mathcal{A}_{\min}(\partial K).$$

\item Hofer, Zehnder and Wysocki proved in their landmark paper \cite{hzw} that if $K \subset \RR^{4}$ is a strictly convex star-shaped domain with smooth boundary, then its boundary $\partial K$ is dynamically convex. It is explained in \cite{dgz, dgrz} that every convex domain in $\RR^4$ with smooth boundary has dynamically convex boundary. Therefore, Corollary \ref{star} is a slight generalization of the agreement $c_1^{GH}(K)= c^{SH}(K)$ to a wider class of star-shaped domains in $\RR^4$.  
\end{enumerate}
     
\end{remark}
\bigskip
\noindent
\textbf{Outline of the paper.} In \S 2, we provide a brief background on several versions of Floer cohomology. Also, the definitions of relative symplectic capacities $c^{HZ}(M,K)$, $c^{S}(M,K)$, $ c_1^{GH}(M,K)$ and $c^{SH}(M,K)$ can be found there. In \S 3, we prove our main results. The proofs of Theorem \ref{mcompint} and Theorem \ref{czint} can be found in \S 3.1 and \S3.2, respectively.\\

\medskip
\noindent
\textbf{Acknowledgement.} The author wishes to express sincere gratitude to Ely Kerman for his insightful discussions and constructive feedback.
\bigskip
\section{Preliminaries}
\subsection{Hamiltonian Floer theory}
In this subsection, we briefly review Hamiltonian Floer theory and related notions. For further details, see \cite{ep, g, o, sala, s}.
\subsubsection{Floer cohomology}
 The \textbf{Novikov field} $\Lambda$ is defined by
\begin{align*}
    \Lambda = \left\{ \sum_{i=1}^\infty c_i T^{\lambda_i} \bigmid c_i \in \QQ, \lambda_i \in \RR\,\,\text{and}\,\, \lim_{i \to \infty} \lambda_i = \infty \right\} 
\end{align*}
where $T$ is a formal variable. There is a valuation map $val : \Lambda \to \RR \cup \{\infty\}$ given by
\begin{align*}
    val (x) = 
    \begin{cases}
      \displaystyle\min_{i} \{\lambda_i \mid c_i \neq 0 \} \,&\text{if}\,\, x = \displaystyle\sum_{i=1}^\infty c_i T^{\lambda_i} \neq 0\\
      \infty \,&\text{if}\,\, x = 0.
    \end{cases}   
\end{align*}
For any $r \in \RR$, define $\Lambda_{\geq r} = val^{-1}([r,\infty])$. In particular, we call
\begin{align*}
    \Lambda_{\geq 0} = \left\{ \sum_{i=1}^\infty c_i T^{\lambda_i} \in \Lambda \bigmid  \lambda_i \geq 0 \right\}
\end{align*}
the \textbf{Novikov ring}. 
\par Let $(M, \omega)$ be a closed symplectic manifold and let $H : S^1 \times M \to \RR$ be a Hamiltonian function on $M$. The \textbf{Hamiltonian vector field} $X_H$ of H is defined by 
\begin{align*}
    \iota_{X_H} \omega = dH.
\end{align*}
We say a Hamiltonian function $H :S^1 \times M \to \RR$ is \textbf{nondegenerate} if every 1-periodic orbit of $X_H$ is nondegenerate, that is, the Poincar\'e return map has no eigenvalue equal to 1. We denote the set of all nondegenerate contractible 1-periodic orbits of $X_H$ by $\mathcal{P}(H)$. Let $x \in \mathcal{P}(H)$ and $\widetilde{x}$ be a disk capping. The \textbf{action} of $(x, \widetilde{x})$ is defined by
\begin{align*}
    \mathcal{A}_H(x, \widetilde{x}) = \int_{D^2} \widetilde{x}^* \omega + \int_{S^1} H(t, x(t))dt.
\end{align*}
We can associate to a pair $(x, \widetilde{x})$ an integer using the \textit{Conley-Zehnder index}, which we denote by $\mu_{\text{CZ}}(x,\widetilde{x})$. Define an equivalence relation on the set of pairs of orbits and cappings by
\begin{align*}
    (x, \widetilde{x}) \sim (y, \widetilde{y}) \,\,\text{if and only if}\,\, x = y, \,\mathcal{A}_H(x, \widetilde{x}) = \mathcal{A}_H(y, \widetilde{y}) \,\,\text{and}\,\, \mu_{\text{CZ}}(x,\widetilde{x}) = \mu_{\text{CZ}}(y,\widetilde{y}).
\end{align*}
We denote the equivalence class of $(x, \widetilde{x})$ by $[x,\widetilde{x}]$ and the set of all equivalence classes by $\widetilde{\mathcal{P}}(H)$. The \textbf{Floer complex of $H$} is defined by
\begin{align*}
    CF(H) = \left\{\sum_{i=1}^{\infty}c_i [x_i, \widetilde{x_i}] \bigmid c_i \in \QQ,  [x_i, \widetilde{x_i}] \in  \widetilde{\mathcal{P}}(H)\,\,\text{and}\,\, \lim_{i \to \infty} \mathcal{A}_H([x_i, \widetilde{x_i}]) = \infty \right\}.
  \end{align*}
The grading of $CF(H)$ is given by
\begin{align*}
    | [x, \widetilde{x}]| = \mu_{\text{CZ}}([x, \widetilde{x}]).
\end{align*}
  Fix a generic almost complex structure $J$ on the tangent bundle $TM$ of $M$. For $x, y \in \mathcal{P}(H)$, let $\pi_2(M, x, y)$ be the set of homotopy classes of smooth maps from $\RR \times S^1$ to $M$ which are asymptotic to $x$ and $y$ at $-\infty$ and $\infty$, respectively. Consider, for $[x, \widetilde{x}], [y, \widetilde{y}] \in \widetilde{\mathcal{P}}(H)$ and $A \in \pi_2(M,x,y)$, the moduli space $\mathcal{M}(H,J; [x, \widetilde{x}], [y, \widetilde{y}]; A)$ of Floer  trajectories of $H$ connecting $x$ and $y$. More precisely, $\mathcal{M}(H,J; [x, \widetilde{x}], [y, \widetilde{y}]; A)$ is the set of smooth maps $u : \RR \times S^1 \to M$ satisfying the following.
\begin{itemize}
    \item The homotopy class of $u$ represents $A \in \pi_2(M,x, y)$.\\
    \item $\displaystyle\frac{\partial u}{\partial s} + J(u) \left( \frac{\partial u}{\partial t} - X_{H_t}(u) \right) = 0$.\\
    \item $\displaystyle\lim_{s \to -\infty} u(s,t) = x(t)$ and $\displaystyle\lim_{s \to \infty} u(s,t) = y(t)$.\\
    \item $[y, \widetilde{y}] = [y, \widetilde{x}\,\#\, (-u)] = [y, \widetilde{x}\,\#\, (-A)] $ where $\#$ denotes the connected sum.\\
\end{itemize}
Note that if $u \in \mathcal{M}(H,J; [x, \widetilde{x}], [y, \widetilde{y}]; A)$, then $ \mathcal{A}_H([x, \widetilde{x}]) \leq \mathcal{A}_H([y,\widetilde{y}])$ and $\mu_{\text{CZ}}([x, \widetilde{x}]) \leq \mu_{\text{CZ}}([y, \widetilde{y}])$. The \textbf{Floer differential} $d : CF^*(H) \lr CF^{*+1}(H)$ of the Floer complex $CF(H)$ is given by
\begin{align*}
    d [x, \widetilde{x}] = \sum_{\substack{[y, \widetilde{y}] \in \widetilde{\mathcal{P}}(H) \\ A \in \pi_2(M, x, y)}} \#\mathcal{M}(H ; [x, \widetilde{x}], [y, \widetilde{y}]; A)  [y, \widetilde{y}]
\end{align*}
where $\#\mathcal{M}(H ; [x, \widetilde{x}], [y, \widetilde{y}]; A)$ is the virtual count defined in \cite{p} by Pardon. It is well-known that $d^2 = 0$ and the \textbf{Floer cohomology} $HF(H)$ of $H$ is defined to be 
\begin{align*}
    HF^*(H) = H^* \left( CF(H), d \right).
\end{align*}
Let $CF^{> L}(H)$ be the subset of $CF(H)$ generated by $[x,\widetilde{x}]$ with $\mathcal{A}_H([x, \widetilde{x}]) > L$. Since Floer differential increases the action, $CF^{> L}(H)$ is a subcomplex of $CF(H)$. Hence, we can define
\begin{align*}
    HF^{> L, *} (H) = H^* (CF^{> L}(H), d).
\end{align*}
For two Hamiltonian functions $H_1$ and $H_2$ with $H_1 \leq H_2$, choose a monotone homotopy $H_s$ of Hamiltonian functions connecting them. Then there is a \textbf{continuation map} 
\begin{align*}
   c^{H_1, H_2} : CF(H_1) \lr CF(H_2)
\end{align*}
defined by the suitable count of Floer trajectories of $H_s$ connecting the orbits of $H_1$ and $H_2$. This map induces a map $HF(H_1) \lr HF(H_2)$. Note that continuation maps increase the actions and respect the gradings. The Floer cohomology $HF(H;\Lambda)$ of $H$ with coefficient in $\Lambda$ is defined by
\begin{align*}
    HF^*(H;\Lambda) = H^*\left( CF(H) \ton \Lambda \right) = HF^*(H)\ton \Lambda.
\end{align*}

\subsubsection{Spectral invariant} Let $(M, \omega)$ be a closed symplectic manifold and let $H : S^1 \times M \to \RR$ be a nondegenerate Hamiltonian function.  In \cite{pss}, Piunikhin, Salamon and Schwarz introduced a PSS isomorphism of $H$
\begin{align*}
    \text{PSS}^H : QH(M ; \Lambda) \lr HF(H ; \Lambda)
\end{align*}
where $QH(M; \Lambda) = H^*(M ; \ZZ) \otimes_{\ZZ} \Lambda$ is the quantum cohomology of $M$ equipped with quantum product. For each $a \in QH(M ; \Lambda)$, the \textbf{spectral invariant} of $a$ is defined by
\begin{align*}
    \rho(a ; H) = \sup \left\{ L \in \RR \bigmid \pi^{H,\leq L} \left(PSS^H(a) \right) = 0 \right\}
\end{align*}
where the map $\pi^{H, \leq L} : HF(H; \Lambda) \lr HF^{\leq L}(H; \Lambda)$ is induced by the projection map
\begin{align*}
    CF(H) \lr CF(H) / CF^{>L}(H).
\end{align*}
Among many properties of spectral invariants, we point out the \textit{Lipschitz property}:
\begin{align}\label{lp}
    \int_{S^1} \min_{M} (H_1 - H_2) dt \leq \rho(a; H_1) - \rho(a; H_2) \leq \int_{S^1} \max_{M} (H_1 - H_2) dt.
\end{align}
By the property \eqref{lp}, we can extend the definition of spectral invariants to all continuous functions $H : S^1 \times M \to \RR$ by $C^0$-approximation. 

Using this spectral invariant, we can define many useful notions. A subset $K \subset M$ is called $a$-\textbf{heavy} if $\rho(a ; H) \leq \displaystyle\max_{K} H$ for all $H \in C^{\infty}(M)$. If $a=1_M$ is the unit element of $QH(M;\Lambda)$, then we say that $K$ is a heavy set. Also, the spectral invariant can be used to define a symplectic capacity. For a compact subset $K \subset M$, we can define the \textbf{spectral capacity} $c^S(M,K)$ of $K$ inside $M$ by
\begin{align*}
    c^S(M, K) = \sup \left\{ - \rho(1_M; H) \bigmid H \leq 0 \,\, \text{and compactly supported in}\,\,S^1 \times (K -\partial K) \right\}.
\end{align*}

\medskip

\subsection{Relative symplectic cohomology}
In this subsection, we give a brief background about relative symplectic cohomology introduced by Varolgunes in \cite{v, vt}. More exposition and application can be found in \cite{a, dgpz, msv}.

 Let $(M, \omega)$ be a closed symplectic manifold and let $H : S^1 \times M \to \RR$ be a nondegenerate Hamiltonian function. Define the \textbf{weighted Floer complex} $CF_w(H)$ of $H$ by $CF_w(H) = CF(H)$. The reason why it is called weighted is that its differential is weighted by the \textit{topological energy} of a Floer trajectory. The \textbf{weighted Floer differential} $d_w : CF^*_w(H) \lr CF^{*+1}_w(H)$ of $CF_w(H)$ is defined by
\begin{align*}
    d_w [x, \widetilde{x}] = \sum_{\substack{[y, \widetilde{y}] \in \widetilde{\mathcal{P}}(H) \\ A \in \pi_2(M, x, y)}} \#\mathcal{M}(H,J ; [x, \widetilde{x}], [y, \widetilde{y}]; A) T^{E_{\text{top}}(u)} [y, \widetilde{y}].
\end{align*}
where the \textbf{topological energy} $E_{\text{top}}(u)$ of a Floer trajectory $u$ is given by
\begin{align*}
    E_{\text{top}}(u) = \int_{S^1} H_t(y(t)) dt - \int_{S^1} H_t(x(t)) dt + \omega(A).
\end{align*}
It can be easily shown that $$E_{\text{top}}(u) = \mathcal{A}_H([y,\widetilde{y}]) - \mathcal{A}_H([x, \widetilde{x}]).$$ For two Hamiltonian functions $H_1$ and $H_2$ with $H_1 \leq H_2$, we can construct a \textbf{weighted continuation map} 
\begin{align*}
    c_w^{H_1, H_2} : CF_w(H_1) \lr CF_w(H_2)
\end{align*}
using monotone homotopy $H_s$ connecting $H_1$ and $H_2$. This  map is also weighted by $T^{E_{\text{top}}(u)}$ in a similar way that we define the weighted differential above.
\begin{remark}\label{morse}
\begin{enumerate}[label=(\alph*)]
    \item We can define the weighted Floer cohomology $HF_w(H)$ and the weighted Floer cohomology $HF_w(H;\Lambda)$ with coefficient in $\Lambda$ of $H$ by
    \begin{align*}
        &HF^*_w(H) = H^*\left(CF_w(H)\right)\,\,\text{and}\\
        &HF^*_w(H;\Lambda) = H^*\left(CF_w(H) \ton \Lambda\right) = HF^*_w(H) \ton \Lambda.        
    \end{align*}
    But we have $$HF_w(H;\Lambda) \cong HF(H ; \Lambda)$$ because the map
    \begin{align*}
        CF(H) \ton \Lambda \lr CF_w(H) \ton \Lambda, \,\,[x,\widetilde{x}] \mapsto T^{\mathcal{A}_H([x, \widetilde{x}])}[x, \widetilde{x}]
    \end{align*}
    induces an isomorphism between $HF(H;\Lambda)$ and $HF_w(H;\Lambda)$.
    \item Let $f : M \to \RR$ be a Morse function. The Morse complex $CM(f)$ of $f$ in degree $k$ is defined by
    \begin{align*}
        CM^k(f) = \bigoplus_{\substack{ x \in \text{Crit}(f)\\ \text{ind}(x ;f) =k}} \Lambda_{\geq 0} \langle x \rangle
    \end{align*}
    where Crit($f$) is the set of all critical points of $f$ and ind($x ;f$) is the Morse index of $x \in \text{Crit}(f)$ with respect to $f$. And the differential of $CM(f)$ is defined by counting Morse trajectory $\gamma : \RR \to M$ satisfying 
    \begin{align*}
        \frac{d\gamma}{ds} =  \nabla_g f(\gamma(s))
    \end{align*}
    where $\nabla_g f$ is the gradient vector field of $f$ with respect to the chosen Riemannian metric $g$ on $M$. Note that a Morse trajectory increases the Morse index. We denote the Morse cohomology $HM^*(f)$ of $f$ by 
    \begin{align*}
        HM^*(f) = H^* \left( CM(f) \right).
    \end{align*}
     We can similarly define the weighted Morse complex. For a Morse function $f$, the weighted Morse differential is weighted by $T^{f(y) - f(x)}$ for Morse trajectory connecting from one critical point $x$ to another critical point $y$. But the map 
    \begin{align*}
        CM(H; \Lambda) \lr CM_w(H) \ton \Lambda, \,\,x \mapsto T^{f(x)} x.
    \end{align*}
    induces an isomorphism between the weighted Morse homology with coefficient in $\Lambda$ and the usual (unweighted) Morse homology with coefficient in $\Lambda$.
    \item If $H : S^1 \times M \to \RR$ is a $C^2$-small nondegenerate Hamiltonian function on $M$, then every Floer trajectory becomes a Morse trajectory of some Morse function $H'$, where the metric $g$ is defined by $$g(v,w) = \omega(v,Jw).$$ For $x \in \text{Crit}(f)$, let $\widetilde{x}$ be the constant disk capping of $x$. Since the relation between the Conley-Zehnder index and the Morse index is given by $$\mu_{\text{CZ}}([x, \widetilde{x}]) = \text{ind}(x, H) - n= n - \text{ind}(x, -H),$$ we have an isomorphism
    \begin{align*}
        HF^k(H) \cong HM_{n-k}(-H).
    \end{align*}
    \end{enumerate}

\end{remark}
To define the relative symplectic cohomology, we need one more input. For any module $A$ over $\Lambda_{\geq 0}$, we can define its completion as follows: For $r' >r$, there exists a map 
\begin{align*}
    \Lambda_{\geq 0} / \Lambda_{\geq r'} \lr \Lambda_{\geq 0} / \Lambda_{\geq r} 
\end{align*}
and this map induces a map
\begin{align}\label{inverse}
    A \ton \Lambda_{\geq 0} / \Lambda_{\geq r'} \lr A \ton \Lambda_{\geq 0} / \Lambda_{\geq r}.
\end{align}
Then $\left\{ A \ton \Lambda_{\geq 0} / \Lambda_{\geq r} \right\}_{r>0}$ is an inverse system with the maps given by \eqref{inverse}. The \textbf{completion} $\widehat{A}$ of $A$ is defined by
\begin{align*}
    \widehat{A} = \varprojlim_{r \to 0} A \otimes \Lambda_{\geq 0} / \Lambda_{\geq r}.
\end{align*}
There exists a natural map over $\Lambda_{\geq 0}$
\begin{align}\label{completion}
    A \lr \widehat{A}.
\end{align}
We say that $A$ is \textbf{complete} if the map \eqref{completion} is an isomorphism. 
Let $K \subset M$ be a compact subset. We say that a nondegenerate Hamiltonian function $H$ is \textbf{K-admissible} if $H$ is negative on $S^1 \times K$. We denote the set of all $K$-admissible Hamiltonian functions by $\mathcal{H}_K$. We can define the \textbf{relative symplectic cohomology} $SH_M(K)$ of $K$ in $M$ by
\begin{align*}
    SH^*_M(K) = H^* \left( \widehat{\varinjlim_{H \in \mathcal{H}_K}} CF_w(H) \right)
\end{align*}
where $\displaystyle\widehat{\varinjlim_{H \in \mathcal{H}_K}} CF_w(H)$ denotes the completion of $\displaystyle\varinjlim_{H \in \mathcal{H}_K} CF_w(H)$. Also, define 
\begin{align*}
    SH^*_M(K; \Lambda) = SH^*_M(K)\ton \Lambda.
\end{align*}

\medskip

\subsection{$S^1$-equivariant relative symplectic cohomology}
Here, we give a concise review of $S^1$-equivariant relative symplectic cohomology introduced in \cite{a}, which basically combines the definition of $S^1$-equivariant symplectic cohomology given by Bourgeois and Oancea in \cite{bo} and the definition of relative symplectic cohomology given by Varolugunes in \cite{v,vt}.

Let $(M, \omega)$ be a closed symplectic manifold and let $K \subset M$ be a compact domain with \textbf{contact type boundary}, that is, there exists an outward pointing vector field $X$ on a neighborhood of $\partial K$ such that $\mathcal{L}_X \omega = \omega$. We call the vector field $X$ a \textbf{Liouville vector field}. If the vector field $X$ is defined on $K$, then we call $K$ a \textbf{Liouville domain}. Note that $\alpha := \iota_X \omega|_{\partial K}$ is a contact form on $\partial K$. A small tubular neighborhood $\partial K \times (-\delta,\delta)$ endowed with a symplectic form $d\left( e^\rho \alpha \right)$ can be symplectically embedded in $M$ via the flow of the Liouville vector field. Here, $\rho$ is a coordinate on $(-\delta, \delta)$.

We say that a nondegenerate Hamiltonian function $H : S^1 \times K \to \RR$ is \textbf{contact type $K$-admissible} if it satisfies the following.
\begin{itemize}
    \item $H$ is negative on $S^1 \times K$.\\
    \item There exists $\eta \geq 0$ such that $H(t, p, \rho) $ is $C^2$-close to $ h_1(e^{\rho})$ on $S^1 \times \left(\partial K \times [0, \frac{1}{3}\eta]\right)$ for some strictly convex and increasing function $h_1$. \\
        \item $H(t, p, \rho) = \beta e^{\rho} +\beta'$ on $S^1 \times (\partial K \times [\frac{1}{3}\eta, \frac{2 }{3}\eta])$ where $\beta \notin \text{Spec}(\partial K, \alpha)$ and $\beta' \in \RR$. Here, $\text{Spec}(\partial K, \alpha)$ denotes the set of all periods of contractible Reeb orbits of $(\partial K, \alpha)$.\\
        \item $H(t, p, \rho) $ is $C^2$-close to $ h_2(e^{\rho})$ on $S^1 \times \left(\partial K \times [\frac{2 }{3}\eta, \eta]\right)$ for some strictly concave and increasing function $h_2$.\\
        \item $H$ is $C^2$-close to a constant function on $S^1 \times \left(M - (K \cup \left(\partial K \times [0, \eta]\right)\right))$.\\
\end{itemize}
 We denote the set of all contact type $K$-admissible Hamiltonian functions by $\mathcal{H}^{\text{Cont}}_K$. If a Hamiltonian function $H$ is $h(e^{\rho})$ on $S^1 \times( {\partial K} \times[0, \eta])$, then any nonconstant Hamiltonian orbit of $H$ is given by 
\begin{align}\label{hamorbit}
    X_H(p, \rho) = -h'(e^{\rho}) R(p)
\end{align}
where $R$ is the Reeb vector field of $(\partial K, \alpha)$. Hence, it corresponds to a Reeb orbit of $(\partial K, \alpha)$ of period $h'(e^{\rho})$ and traversed in the opposite direction of the Reeb orbit. If $K$ is a Liouville domain, then the action of a nonconstant Hamiltonian orbit $x$ of $H$ can be computed as
\begin{align}\label{for}
    \mathcal{A}_H(x) = - e^{\rho} h'(e^{\rho} ) + h (e^{\rho}). 
\end{align}

Let $H \in \mathcal{H}^{\text{Cont}}_K$ and define the complex $CF_w^{S^1}(H)$ by
\begin{align*}
    CF_w^{S^1}(H) = \Lambda_{\geq 0}[u] \otimes_{\Lambda_{\geq 0}} CF_w(H)
\end{align*}
where $u$ is a formal variable of degree 2. The grading of $CF_w^{S^1}(H)$ is given by
\begin{align}\label{grcon}
    | u^k \otimes [x,\widetilde{x}] | = -2k +\mu_{\text{CZ}}([x,\widetilde{x}]).
\end{align}Its differential is of the form 
\begin{align*}
    d_w^{S^1} (u^k \otimes [x,\widetilde{x}]) = \sum_{i=0}^k u^{k-i} \otimes \psi_i([x,\widetilde{x}]) 
\end{align*}
where $\psi_i$ is a count of Floer trajectories of some perturbation of $H$ weighted by the topological energy of Floer trajectories. For the precise definition of $\psi_i$, see \cite{bo}. The \textbf{$S^1$-equivariant relative symplectic cohomology of $K$ in $M$} is defined by
    \begin{align*}
        SH^{S^1,*}_M(K) &= H^* \left(\widehat{\varinjlim_{H \in \mathcal{H}^{\text{Cont}}_K}} CF_w^{S^1}(H)\right)\,\,\text{and}\,\,SH^{S^1,*}_M(K;\Lambda) =  SH^{S^1,*}_M(K) \ton \Lambda.
\end{align*}
\medskip
\subsection{Examples of relative symplectic capacities}
One example of relative symplectic capacity which we already saw is the spectral capacity explained in \S  2.1.2. In this subsection, we will introduce a couple of more relative symplectic capacities.

In \cite{gg}, Ginzburg and G{\"u}rel introduced a relative version of Hofer-Zehnder capacity which we briefly recall here. Let $(M, \omega)$ be a compact symplectic manifold possibly with boundary and $K \subset M$ be a compact subset. A smooth function $H : M \to \RR$ is called \textbf{Hofer-Zehnder admissible relative to $K$} if it satisfies the following.
\begin{enumerate}[label=(B\arabic*)]
    \item $H \leq 0$. 
    \item There exists an open subset $U \subset M- \partial M$ containing $K$ such that $H |_U = \min H$.
    \item There exists a compact subset $C \subset M - \partial M$ such that $H|_{M-C} = 0$.
    \item Every nonconstant periodic orbit of the Hamiltonian vector field $X_H$ has period $T > 1$.
\end{enumerate}
We denote the set of all Hofer-Zehnder admissible functions relative to $K$ on $M$ by $\mathcal{HZ}(M, K)$. Then the \textbf{relative Hofer-Zehnder capacity} $c_{HZ}(M,K)$ of $(M,K)$ is defined to be
\begin{align*}
    c^{HZ}(M, K) = \sup \left\{ - \min H \bigmid H \in \mathcal{HZ}(M,K) \right\}.
\end{align*}
In a similar way, we can define the $\pi_1$-sensitive relative Hofer-Zehnder capacity by replacing the axiom (B4). 
\begin{enumerate}[label=(B4$^{\circ}$)]
    \item Every nonconstant \textit{contractible} periodic orbit of the Hamiltonian vector field $X_H$ has period $T > 1$.
\end{enumerate}
Let $\mathcal{HZ}^{\circ}(M,K)$ be the set of all smooth functions on $M$ satisfying (B1) -- (B3) and (B4$^{\circ}$). The \textbf{$\pi_1$-sensitive relative Hofer-Zehnder capacity} $\Tilde{c}^{HZ}(M,K)$ of $(M,K)$ is defined by
\begin{align*}
    \Tilde{c}^{HZ}(M,K) = \sup \left\{ - \min H \bigmid H \in \mathcal{HZ}^{\circ}(M,K) \right\}.
\end{align*}
Note that $c^{HZ}(M, K) \leq \Tilde{c}^{HZ}(M,K)$. Furthermore, $c^{HZ}(M,K) \leq c^{HZ}(K)$ and $\Tilde{c}^{HZ}(M,K) \leq \Tilde{c}^{HZ}(K)$.

For the rest of this subsection, we assume that $(M, \omega)$ is a closed symplectic manifold which is \textit{symplectically aspherical}, that is,
\begin{align*}
    \omega|_{\pi_2(M)} = 0\,\,\text{and}\,\,c_1(TM)|_{\pi_2(M)} = 0
\end{align*}
where $c_1(TM)$ is the first Chern class of the tangent bundle $TM$ of $M$. Let $K \subset M$ be a Liouville domain. Additionally, we need the following assumption on $\partial K$: We say that $\partial K$ is \textbf{index-bounded} if, for each $\ell \in \ZZ$, the set of periods of contractible Reeb orbits of $\partial K$ of Conley-Zehnder index $\ell$ is bounded.

As in the case of Floer cohomology, we can introduce action filtrations of $SH_M(K)$ and $SH^{S^1}_M(K)$. Let $H_1, H_2 \in  \mathcal{H}_K^{\text{Cont}}$ satisfying $H_1 \leq H_2$. Then the continuation map connecting two Floer complexes $CF_w(H_1)$ and $CF_w(H_2)$ increases the action and thus the family $\left\{ CF_w^{>L}(H) \right\}_{H \in  \mathcal{H}_K^{\text{Cont}}}$ still forms a direct system together with continuation maps for each $L \in \RR$. We can define
\begin{align*}
    SH^{> L,*}_M(K) = H^* \left( \widehat{\varinjlim_{H \in \mathcal{H}_K^{\text{Cont}}}} CF_w^{> L}(H) \right).
\end{align*}
We can consider the \textit{negative relative symplectic cohomology} $SH^{-}_M(K)$. Roughly speaking, it is generated by the nonconstant Hamiltonian orbits of $H \in \mathcal{H}_K^{\text{Cont}}$. We denote the action filtration of $SH^{-}_M(K)$ by $SH^{-,>L}_M(K)$. The action filtration $SH^{S^1,>L}_M(K)$ of $SH^{S^1}_M(K)$ can be defined in an analogous way. Also, we can define the \textit{negative $S^1$-equivariant relative symplectic cohomology} $SH^{S^1,-}_M(K)$ and its action filtration $SH^{S^1,-,>L}_M(K)$. Detailed exposition can be found in \cite{a}.

Keys to define the relative symplectic (co)homology capacity and the relative Gutt-Hutchings capacity are the following exact triangles combining various cohomologies.
\begin{theorem}[\cite{a}]\label{ing} We have the following exact triangles
\begin{align}\label{ex}
    \includegraphics[scale=1.2]{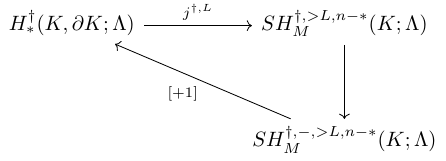}
\end{align}
where $[+1]$ denotes the degree 1 map with respect to $*$ and $\dagger = \emptyset$ or $S^1$.
     
\end{theorem}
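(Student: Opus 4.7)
The plan is to derive the stated exact triangles from short exact sequences of Floer chain complexes, pass to direct limits over $\mathcal{H}^{\text{Cont}}_K$, complete over $\Lambda_{\geq 0}$, and take cohomology. At the chain level, for each $H \in \mathcal{H}^{\text{Cont}}_K$ and each $L \in \RR$, the subgroup $CF_w^{>L}(H) \subset CF_w(H)$ generated by orbits of action greater than $L$ is a subcomplex, because the weighted Floer differential strictly increases the action. This yields the tautological short exact sequence
\begin{align*}
    0 \lr CF_w^{>L}(H) \lr CF_w(H) \lr CF_w(H)/CF_w^{>L}(H) \lr 0,
\end{align*}
which is the source of the action-filtration triangle.

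For the constant versus nonconstant split, I would examine the $1$-periodic orbits of $H \in \mathcal{H}^{\text{Cont}}_K$ directly from the definition: by the prescribed shape of $H$ on the collar $\partial K \times [0,\eta]$, all nonconstant orbits live in the collar and correspond to Reeb orbits of $(\partial K, \alpha)$ via formula \eqref{hamorbit}, while the constant orbits sit in the two ``Morse regions'' where $H$ is $C^2$-close to a Morse function. The action identity \eqref{for} shows that by calibrating the slope $\beta$ together with the profiles $h_1, h_2$, one can arrange every nonconstant orbit to have action strictly greater than some level $L_0$ that dominates the actions of all constant orbits. At such a cutoff $CF_w^{>L_0}(H)$ is precisely the subcomplex generated by nonconstant orbits, while the quotient is generated by constants and computes a Morse-theoretic piece built from $M$ (via Remark \ref{morse}(c)); this yields the ``constant'' term in the stated triangle.

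Passing to direct limits over $\mathcal{H}^{\text{Cont}}_K$ preserves the short exact sequence because $\varinjlim$ is exact; the only care needed is that continuation maps between comparable $H_1 \leq H_2$ respect the chosen cutoff uniformly, which can be arranged by performing the split at a level depending on $H$ and then restricting to a cofinal subfamily. The genuinely delicate step is completion: the functor $A \mapsto \widehat{A}$ is exact only when the inverse system $\{A \ton \Lambda_{\geq 0}/\Lambda_{\geq r}\}_{r > 0}$ satisfies a Mittag-Leffler condition. Here the assumption that $\partial K$ is \emph{index-bounded} is essential, because it guarantees that in each Conley-Zehnder degree only finitely many Reeb orbits (hence nonconstant Hamiltonian orbits) contribute below any fixed action bound, so the inverse system truncates degree-wise and the relevant $\varprojlim^1$ obstruction vanishes. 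With this in place, the SES survives completion and the snake lemma on cohomology delivers the exact triangles for $\dagger = \emptyset$.

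The case $\dagger = S^1$ is obtained by applying the same construction to $CF_w^{S^1}(H) = \Lambda_{\geq 0}[u] \otimes_{\Lambda_{\geq 0}} CF_w(H)$. Each operator $\psi_i$ entering $d_w^{S^1}$ is itself a weighted count of Floer trajectories of a perturbation of $H$, so provided the Bourgeois-Oancea perturbations are chosen adapted to $\mathcal{H}^{\text{Cont}}_K$, every $\psi_i$ preserves both the action filtration and the constant/nonconstant decomposition, so the chain-level SES lifts verbatim. The main obstacle in the whole argument is verifying exactness \emph{after completion}: one must coordinate action bounds, Conley-Zehnder grading, and the Mittag-Leffler condition across the direct system indexed by $\mathcal{H}^{\text{Cont}}_K$, and then check that this bookkeeping remains compatible with the $S^1$-equivariant perturbations defining $\psi_i$.
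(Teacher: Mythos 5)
This theorem is stated with a citation to \cite{a} (the author's own earlier paper) and is not re-proved in the present text, so there is no in-paper proof to compare against. Evaluating your outline on its own terms, the high-level plan (chain-level short exact sequence, direct limit, completion via Mittag-Leffler, and the observation that index-boundedness is what makes the Mittag-Leffler condition hold) is the right strategy, but the crucial chain-level decomposition is reversed. From the context (the notation $j^L : H(K,\partial K;\Lambda)\to SH^{>L}_M(K;\Lambda)$ and the diagram \eqref{comp}), the triangle has the form $SH^{\dagger,-,>L}_M(K;\Lambda)\to H^\dagger(K,\partial K;\Lambda)\xrightarrow{j^{\dagger,L}} SH^{\dagger,>L}_M(K;\Lambda)\to[+1]$, which forces the \emph{constant} orbits in $K$ to be the subcomplex and the nonconstant orbits to be the quotient. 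You assert the opposite: that nonconstant orbits can be arranged to have action strictly greater than all constant orbits, so that $CF_w^{>L_0}(H)$ is the nonconstant subcomplex and the constants form the quotient. But by \eqref{for}, a nonconstant lower orbit of period $T$ sitting at level $\rho$ has action $\approx -e^{\rho}T<0$, while a constant orbit in $K$ (where $H$ is $C^2$-small and negative) has action $\approx 0^{-}$. Constants therefore have \emph{larger} action, and since the weighted Floer differential increases action, it is the constants that form the subcomplex. Your proposed SES would produce a triangle with arrows pointing the wrong way and would not recover the map $j^L$.

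A second, related gap: you identify the constant-orbit piece as "a Morse-theoretic piece built from $M$," but the triangle involves $H^\dagger(K,\partial K;\Lambda)$, not $H^\dagger(M;\Lambda)$. Getting the pair cohomology requires first disposing of the "upper orbits" (constant orbits outside $K$ and nonconstant orbits in the concave part of the collar). This is a genuine input, not bookkeeping: one needs $\widehat{\varinjlim}\,CF_{w,\uparrow}(G)=0$ after completion, which is Lemma 5.3 of \cite{dgpz} (used again in the proof of Lemma \ref{iri} here), together with the Morse-theoretic identification of the constants inside $K$ with $H(K,\partial K)$ via the outward-pointing gradient on $\partial K$ (as in Lemma \ref{onemin} and Corollary \ref{pinch}). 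Without this step the middle term of your triangle is wrong. Once the action ordering and the handling of upper orbits are corrected, the remainder of your outline — exactness of $\varinjlim$, Mittag-Leffler over a countable cofinal sequence in $\{r>0\}$, and lifting to the $S^1$-equivariant complex $\Lambda_{\geq 0}[u]\otimes_{\Lambda_{\geq 0}} CF_w(H)$ by verifying that each $\psi_i$ preserves both the action filtration and the constant/nonconstant split — is a reasonable reconstruction of the argument.
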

We denote the horizontal map in \eqref{ex} for $\dagger = \emptyset$ and $S^1$ by
\begin{align*}
    j^{\emptyset,L} = j^L : H(K,\partial K; \Lambda) \lr SH^{>L}_M(K ;\Lambda)
\end{align*}
and
\begin{align*}
    j^{S^1,L} : H^{S^1}(K,\partial K; \Lambda) \lr SH^{S^1,>L}_M(K ;\Lambda),
\end{align*}
respectively. Note that
\begin{align*}
  H_*^{S^1}(K,\partial K; \Lambda) &=  H_*(BS^1;\Lambda) \otimes H_*(K,\partial K; \Lambda)\\& = H_*(\CC P^{\infty};\Lambda) \otimes H_*(K,\partial K; \Lambda) \\&= \Lambda[u]  \otimes H_*(K,\partial K; \Lambda)
\end{align*}
since the $S^1$-action on the pair $(K,\partial K)$ is trivial. The \textbf{relative symplectic (co)homology capacity} $c^{SH}(M,K)$ of $K$ inside $M$ is defined by
        \begin{align*}
            c^{SH}(M,K) = - \sup \left\{ L<0 \bigmid j^L\left([K,\partial K]\right) = 0 \right\}
        \end{align*}
        where $[K, \partial K]$ is the fundamental class of $H_{2n}(K,\partial K; \Lambda)$. And the \textbf{first relative Gutt-Hutchings capacity} $c_1^{GH}(M,K)$ of $K$ inside $M$ is defined by
        \begin{align*}
            c_1^{GH}(M,K) = - \sup \left\{ L<0 \bigmid j^{S^1,L}(1\otimes\left[K,\partial K]\right) = 0 \right\}
        \end{align*}
        where $1 \in H(BS^1;\Lambda)$. It is proved in \cite{a} that
        \begin{align}\label{inq}
            c_1^{GH}(M,K) \leq c^{SH}(M,K).
        \end{align}
        Furthermore, if the contact manifold $(\partial K, \alpha)$ is dynamically convex, then we can prove the other inequality $$c^{SH}(M,K) \leq c_1^{GH}(M,K)$$ and hence
        $c_1^{GH}(M,K) = c^{SH}(M,K)$.
        \begin{remark}
            The definition of $k$-th relative Gutt-Hutchings capacity for each $k=1,2,3,\cdots$ can be found in \cite{a}.
        \end{remark}


        \medskip
\section{Comparisons}
\subsection{Comparison of $c^S(M,K)$ with $c^{SH}(M,K)$} To compare $c^S(M,K)$ with $c^{SH}(M,K)$, we need several ingredients. The following lemma and its corollary are our first ingredients.

\begin{lemma}\label{onemin}
       Let $(M, \omega)$ be a closed symplectic manifold and $K\subset M$ be a compact domain with contact type boundary. Then there exists a Morse function $f : M \to \RR $ that satisfies the following.
       \begin{enumerate}[label=(\alph*)]
           \item $f$ is negative on $K$.
           \item $f$ has only one critical point corresponding to the minimum of $f$ on $K$.
           \item There are no Morse trajectories of $f$ connecting a critical point of $f$ outside of $K$ to a critical point of $f$ inside of $K$.
       \end{enumerate}
                 
   \end{lemma}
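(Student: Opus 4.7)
The plan is to exploit the contact-type collar of $\partial K$ to arrange $\nabla f$ to point outward along $\partial K$; this turns $\partial K$ into a one-way barrier for Morse trajectories and immediately yields property (c), while (a) and (b) are arranged by prescribing $f$ explicitly in the collar and extending carefully. Concretely, using the outward Liouville vector field, I would fix a collar neighborhood $N \cong \partial K \times (-\delta, \delta) \subset M$ with normal coordinate $\rho$ so that $\rho \in (-\delta, 0]$ parameterizes the portion inside $K$ and $\rho \in [0, \delta)$ the portion outside. On $N$, set $f(p,\rho) := \psi(\rho)$ for a smooth strictly increasing function $\psi$ with $\psi(0) = 0$; then $f|_N$ is strictly monotone in $\rho$ (so has no critical points on $N$), is negative on the inner half of the collar, and $\partial K = \{\rho=0\}$ is a regular level set.

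I would then extend $f$ to $K_0 := K \setminus (\partial K \times (-\delta/2, 0])$ by a Morse function that (i) matches $\psi(\rho)$ smoothly on a thin inner collar of $\partial K_0$, (ii) is strictly negative, and (iii) has a unique critical point of index zero at which the global minimum of $f|_K$ is attained. Any handle decomposition of the connected compact manifold $K_0$ with a single $0$-handle produces such a function; bump-function gluing with the collar formula introduces no new critical points provided $\psi$ is strictly monotone in the matching region and the interior Morse model is flat there. I would then extend $f$ to $M \setminus K$ by any Morse function smoothly matching $\psi$ on the outer half of $N$, and choose a Riemannian metric $g$ on $M$ that on $N$ takes the product form $d\rho^2 + h_\rho$ for some smooth family $\{h_\rho\}$ of metrics on $\partial K$.

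Properties (a) and (b) are then immediate from the construction. For (c), the product metric yields
\[
\nabla_g f\big|_N \;=\; \psi'(\rho)\,\partial_\rho,
\]
so any Morse trajectory $\dot\gamma = \nabla_g f$ satisfies $\dot\rho = \psi'(\rho) > 0$ throughout $N$. A trajectory therefore cannot cross $\partial K$ from outside ($\rho > 0$) to inside ($\rho < 0$), and in particular no Morse trajectory of $f$ can connect an outside critical point to an inside one.

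The main obstacle is reconciling the rigid collar formula $\psi(\rho)$ with the interior model so that the globally defined $f$ remains Morse with the correct critical-point pattern inside $K$. The topological input is the existence of a handle decomposition of $K_0$ with a unique $0$-handle, which is standard for any connected compact manifold with nonempty boundary, and the analytic input is the bump-function gluing in the inner collar, which creates no new critical points provided $\psi'$ is large on the interpolation region. Neither step is seriously obstructed; the heart of the argument is the outward-gradient observation that forces (c).
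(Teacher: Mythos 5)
Your proof is correct and establishes all three properties, but the argument for part~(c) takes a genuinely different route from the paper's. The overall outline is the same: build a single-minimum Morse model on $K$ (you use a handle decomposition with a single $0$-handle; the paper collapses a maximal tree in the $1$-skeleton of a CW structure -- topologically equivalent inputs), extend it by a strictly increasing function across a collar of $\partial K$, and extend further to $M$. The paper then derives~(c) from the monotonicity of $f$ along positive gradient trajectories together with the observation that critical values outside $K$ exceed those inside $K$; this is a comparison of \emph{function values} and does not constrain the metric on the collar (the paper works with a generic metric). You instead impose a product metric $d\rho^2 + h_\rho$ on the collar so that $\nabla_g f = \psi'(\rho)\,\partial_\rho$, making $\partial K$ a one-way barrier for the gradient flow: $M\setminus \mathrm{int}(K)$ becomes forward-invariant, so no trajectory starting at a critical point outside $K$ can limit to one inside $K$. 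Your transversality argument is somewhat more robust -- it is insensitive to the actual values of $f$ outside $K$, whereas the paper's argument implicitly requires the collar extension to climb above every critical value in $K$ before reaching the outer region -- and it also delivers for free the outward-pointing-gradient property along $\partial K$ that the paper later invokes in Corollary~\ref{pinch}.
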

    \begin{proof}
        Recall that every smooth manifold admits a CW structure with a $k$-cell for each critical point of index $k$ of a Morse function defined on it. Using this fact, we first construct a Morse function on $K$ which has only one critical point corresponding to the minimum. To do this, let $g : K \to \RR$ be a Morse function. Then the critical points of $g$ corresponding to local minima of $g$ form a 0-skeleton of $K$. Let $T$ be a maximal tree, a contractible subset of 1-skeleton that reaches all the vertices, of $K$. Since $T$ is contractible by its definition, the projection map $M \lr M/T$ is a homotopy equivalent. Since $M/T$ has only one 0-cell, we can homotope $g$ so that it has only one critical point of index 0. We still denote this transformed function by $g$. We may assume that $g$ is negative. Now it remains to extend the function $g: K \to \RR$ to $f : M\to \RR$ to satisfy (c). Let $\partial K \times [0, \delta]$ be a small collar neighborhood of $\partial K$. We extend $g$ as follows.
        \begin{itemize}
            \item $f(p, r) = h_1(e^r)$ on $\partial K \times [0, \frac{\delta}{2}]$ for some strictly increasing and convex function $h_1$.\\
            \item $f(p, r) = h_2(e^r)$ on $\partial K \times [\frac{\delta}{2}, \delta]$ for some strictly increasing and concave function $h_2$.\\
            \item $f$ is a Morse function $C^2$-close to a constant function on $M - (K \cup (\partial K \times [0, \delta]))$.\\
        \end{itemize}
        Note that there are no critical points of $f$ on $\partial K \times [0,\delta]$ by its construction. For critical points $x,y \in \text{Crit}(f)$ and a Morse trajectory $\gamma:\RR \lr M$ satisfying
    \begin{align*}
        \lim_{s\to -\infty} \gamma(s) = x\,\,\text{and}\,\, \lim_{s\to \infty} \gamma(s) = y,
    \end{align*}
    we have 
    \begin{align*}
        f(y) - f(x) &= \int_{-\infty}^{\infty} \frac{d}{ds}f(\gamma(s)) ds\\& = \int_{-\infty}^{\infty} g(\nabla_gf(\gamma(s)), \gamma'(s)) ds\\& =  \int_{-\infty}^{\infty} g(\nabla_gf(\gamma(s)), \nabla_gf(\gamma(s))) ds\\&\geq 0
    \end{align*}
    where $g$ is a generic metric on $M$, and hence the critical value increases along any Morse trajectory. Since the critical values that appear outside of $K$ are greater than the critical values corresponding to the critical points in $K$, the third condition (c) is satisfied.\\
    \end{proof}
  
         The point of Lemma \ref{onemin} is the following corollary.
  
   \begin{corollary}\label{pinch}
        Let $(M, \omega)$ be a closed symplectic manifold and $K\subset M$ be a Liouville domain. Then there exists a map 
        \begin{align}\label{mtok}
            \rho :H^*(M ;\Lambda) \lr H_{2n-*}(K, \partial K;\Lambda).
        \end{align}
        
    \end{corollary}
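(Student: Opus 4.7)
The plan is to exploit the Morse function $f$ produced by Lemma \ref{onemin} to obtain a chain-level splitting of its Morse cochain complex, and then to identify the resulting quotient with the cohomology of $K$ via Poincar\'e--Lefschetz duality.

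Partition $\text{Crit}(f) = C_{\text{in}} \sqcup C_{\text{out}}$ according to whether the critical points lie in $K$ or outside $K$, and let $CM^*_{\text{in}}$ and $CM^*_{\text{out}}$ denote the graded $\Lambda$-subspaces they generate (working with $\Lambda$-coefficients throughout). Condition (c) of Lemma \ref{onemin} is precisely the statement that $CM^*_{\text{out}}$ is closed under the Morse differential, so
\begin{align*}
    0 \lr CM^*_{\text{out}} \lr CM^*(f) \lr CM^*_{\text{in}} \lr 0
\end{align*}
is a short exact sequence of cochain complexes, and the quotient map induces
\begin{align*}
    q_* :\, H^*(M;\Lambda)\, =\, HM^*(f;\Lambda)\, \lr\, HM^*(CM^*_{\text{in}};\Lambda).
\end{align*}

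Next, I would identify the target with $H^*(K;\Lambda)$. The Liouville structure near $\partial K$ forces $f = h_1(e^\rho)$ with $h_1$ strictly increasing on the inner collar, so $\nabla f$ points strictly outward along $\partial K$; therefore any upward $f$-trajectory that exits $K$ cannot re-enter. Combined with property (c), this shows that every upward $f$-trajectory connecting two critical points in $C_{\text{in}}$ must stay inside $K$, and hence the induced differential on $CM^*_{\text{in}}$ coincides with the Morse differential of $f|_K$ viewed as a Morse function on the manifold with boundary $K$. A standard argument---for instance by extending $f|_K$ to the symplectic completion $\widehat{K}$ via a strictly increasing function of $e^\rho$ with no critical points on the cylindrical end, and invoking Morse theory for this proper function---then identifies $HM^*(f|_K;\Lambda)$ with $H^*(K;\Lambda)$.

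Finally, Poincar\'e--Lefschetz duality for the compact $2n$-manifold with boundary $K$ yields
\begin{align*}
    \rho :\, H^*(M;\Lambda)\, \xrightarrow{\,q_*\,}\, H^*(K;\Lambda)\, \xrightarrow{\,\cong\,}\, H_{2n-*}(K,\partial K;\Lambda),
\end{align*}
which is the desired map; note that $\rho$ necessarily carries $1_M$ to the fundamental class $[K,\partial K]$, which is presumably what will be required in the sequel. The main technical point to check carefully is the identification in the second paragraph---namely that no upward $f$-trajectory between critical points in $C_{\text{in}}$ can leave the Liouville domain and return---which the outward Liouville direction, together with the specific model of $f$ near $\partial K$ guaranteed by Lemma \ref{onemin}, ensures.
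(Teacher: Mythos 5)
Your proof is correct and follows essentially the same route as the paper: both split $\text{Crit}(f)$ into the critical points inside and outside $K$ (the paper writes the outside part as $CM(f|_{K'})$ with $K'=M-(K\cup(\partial K\times[0,\delta)))$, which coincides with your $CM^*_{\text{out}}$ since the collar carries no critical points), use property (c) of Lemma \ref{onemin} to see this is a subcomplex, and then identify the quotient's cohomology with $H_{2n-*}(K,\partial K;\Lambda)$ via the outward-pointing gradient plus Lefschetz duality. The paper phrases the last identification via Remark \ref{morse}(c) (the Morse homology/cohomology degree shift) rather than your explicit extension of $f|_K$ to $\widehat{K}$, but the content is the same.
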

\begin{proof}
    Let $f : M \to \RR$ be the function constructed in Lemma \ref{onemin}. Moreover, we may assume that $f$ is $C^2$-small on $K$. Let $$K' = M - (K \cup (\partial K\times [0,\delta))).$$ Since there are no Morse trajectories of $f$ from a critical point outside of $K$ to a critical point inside of $K$, the Morse complex $CM(f|_{K'})$ of the restriction $f|_{K'}$ is a subcomplex of $CM(f)$. Then there exists a short exact sequence of chain complexes
    \begin{align}\label{ses}
        0\lr CM(f|_{K'}) \lr CM(f) \lr CM(f)/CM(f|_{K'}) \lr0.
    \end{align}
    This short exact sequence \eqref{ses} induces a long exact sequence
    \begin{align}\label{les}
        \includegraphics[scale=1.2]{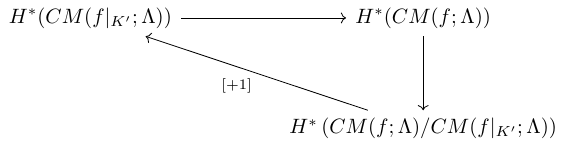}.
    \end{align}
    Note that $CM(f)/CM(f|_{K'})$ is generated by the critical points of $f$ on $K$. As $K$ is a Liouville domain, every Floer trajectory connecting Hamiltonian orbits on $K$ has its image entirely in $K$. See Lemma 2.2 of \cite{co}. As a consequence of $f$ being $C^2$-small on $K$, we can reformulate this statement as: every Morse trajectory connecting critical points in $K$ has its image inside $K$. Therefore,
    \begin{align}\label{noes}
        CM(f)/CM(f|_{K'}) \cong CM(f|_K).
    \end{align}
    Obviously, $$H^*(CM(f;\Lambda)) \cong H^*(M;\Lambda).$$
    Since the gradient of $f$ points outward along $\partial K$, we have
    \begin{align*}
        H^*(CM(f;\Lambda)/CM(f|_{K'};\Lambda)) &\cong H^*(CM(f|_K;\Lambda))&&\text{By \eqref{noes}}\\
        &\cong H_{2n-*}(K, \partial K;\Lambda) &&\text{By (c) of Remark \ref{morse}}
    \end{align*}
    The vertical map in \eqref{les} is the desired map $\rho : H(M ;\Lambda) \lr H(K, \partial K;\Lambda)$.
 
\end{proof}

\begin{remark}
    The map $\rho : H^*(M ;\Lambda) \lr H_{2n-*}(K, \partial K;\Lambda) $ can be derived as follows: We view $H^*(M ;\Lambda)$ as the de Rham cohomology of $M$. Then the restriction of differential forms to $K$ yields a map
     \begin{align*}
        H^*(M;\Lambda) \lr H^*(K;\Lambda),\,\,\mu \mapsto \mu|_{K}.
    \end{align*}
    Since there exists an isomorphism
    \begin{align*}
        H^*(K;\Lambda) \cong H_{2n-*}(K,\partial K;\Lambda)
    \end{align*}
    by Lefschetz duality, we also get \eqref{mtok}. 
\end{remark}

Note that the map \eqref{mtok} maps the unit element $1_M \in QH^0(M;\Lambda)$ to the fundamental class $[K,\partial K] \in H_{2n}(K,\partial K;\Lambda)$ because it maps the the critical point corresponding to the minimum of $f$ to the critical point corresponding to the maximum of $-f$. One more input that we need is an analogue of the following isomorphism constructed in \cite{i} by Irie.

\begin{theorem}[\cite{i}, Lemma 2.5]\label{i}
    Let $K$ be a Liouville domain. For each $L<0$ such that $-L$ is not a period of any Reeb orbits on $\partial K$, there exists a Hamiltonian function $H : S^1 \times \widehat{K} \to \RR$ such that 
    \begin{align}\label{ii}
           HF(H ; \Lambda) \cong SH^{>L}(K;\Lambda).
    \end{align}
    Moreover, this isomorphism \eqref{ii} commutes with continuation maps.
\end{theorem}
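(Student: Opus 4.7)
The plan is to construct, for each admissible $L$, a single Hamiltonian $H$ on $\widehat{K}$ whose $1$-periodic orbits all have action strictly greater than $L$, so that $CF(H) = CF^{>L}(H)$ and hence $HF(H;\Lambda) = HF^{>L}(H;\Lambda)$, and then to identify this filtered Floer cohomology with $SH^{>L}(K;\Lambda)$ by extending $H$ to a cofinal family of Hamiltonians in which every newly appearing orbit has action below $L$. The naturality with respect to continuation maps will come for free from this identification carried out at the level of the direct system.

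To carry out step one, set $\beta := -L$, and use local finiteness of the Reeb spectrum $\text{Spec}(\partial K,\alpha)$ to pick $T^* \in (T_k, \beta)$ disjoint from $\text{Spec}$, where $T_k$ is the largest period in $(0,\beta)\cap \text{Spec}$; also choose $0 < \epsilon < \beta - T_k$. I design $H : \widehat{K} \to \RR$ so that (i) $H$ is a $C^2$-small Morse function on $K$ with values near $-\epsilon$; (ii) $H = h(e^\rho)$ on a collar $\partial K \times [0,\delta]$ for some strictly convex $h$ with $h'(1) = 0$, $h'(e^\delta) = \beta$, chosen so that $h'$ stays below $T^*$ on every radius where $h'$ meets $\text{Spec}$ (equivalently, $h'$ is nearly flat on most of the collar and rises steeply to $\beta$ only near the outer edge); (iii) $H = \beta e^\rho + c$ at infinity. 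Since $\beta \notin \text{Spec}$, the linear region produces no orbit; the constants in $K$ have action $\approx -\epsilon > L$; and by \eqref{hamorbit} and \eqref{for} each nonconstant orbit sits at some $r_i$ close to $1$ with $h(r_i) \approx -\epsilon$, giving action $-r_i T_i + h(r_i) \approx -T_i - \epsilon > -\beta = L$. This forces $CF(H) = CF^{>L}(H)$.

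To carry out step two, I extend $H$ to a cofinal sequence $H_n \in \mathcal{H}_K^{\text{Cont}}$ with $H_1 = H$, slopes $s_n \nearrow \infty$ chosen outside $\text{Spec}$, and $H_n \equiv H$ on $K \cup (\partial K \times [0, R_n])$ for some $R_n \nearrow \infty$. The new $1$-periodic orbits of $H_n$ live in the outer convex transition where the slope climbs from $\beta$ to $s_n$, hence correspond to Reeb periods $T \in (\beta, s_n) \cap \text{Spec}$ located at radii $e^\rho \gtrsim e^{R_n}$; formula \eqref{for} then gives action $-e^\rho T + h_n(e^\rho) \leq c_n - e^{R_n}(T - \beta)$, which is arbitrarily negative once $R_n$ is large enough. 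All these additional generators therefore lie in $CF^{\leq L}(H_n)$, so the action-preserving continuation maps induce isomorphisms $HF^{>L}(H;\Lambda) \cong HF^{>L}(H_n;\Lambda)$ compatible with the direct-system structure. Passing to the direct limit and then the completion yields $HF^{>L}(H;\Lambda) \cong SH^{>L}(K;\Lambda)$, and combining with step one gives the claimed isomorphism naturally with respect to continuation.

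The principal technical difficulty is the calibration of $h$ in step (ii): I need a strictly convex $h : [1, e^\delta] \to \RR$ with $h'(1) = 0$ and $h'(e^\delta) = \beta$ such that every $r_i \in (h')^{-1}(\text{Spec} \cap (0,\beta))$ lies in a sufficiently small neighborhood of $r = 1$, while $h$ itself remains within $o(1)$ of $-\epsilon$ on that neighborhood. Because $\text{Spec} \cap (0,\beta)$ is finite, one can first prescribe a strictly convex profile on a tiny subinterval $[1, 1+\eta]$ that achieves slope $T^*$ with $h(1+\eta) - h(1)$ as small as desired, and then glue in a second strictly convex piece on $[1+\eta, e^\delta]$ that rises from $T^*$ to $\beta$ through the spectrum-free gap $(T^*,\beta)$. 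Checking that this profile glues smoothly with the $C^2$-small Morse function on $K$ and with the linear piece at infinity, while producing a nondegenerate Hamiltonian, is elementary but is where the bulk of the bookkeeping lies.
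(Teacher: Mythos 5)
The paper does not prove this statement; it is quoted verbatim from Irie's Lemma~2.5 (\cite{i}). The closest thing to an in-paper proof is the argument for the claim \eqref{irilem} inside the proof of Lemma~\ref{iri}, which the author explicitly says is ``inspired by the proof of Lemma~2.5 of \cite{i}''. Your construction of $H$ and the ``stabilize the direct system at slope $\beta=-L$'' strategy is the same basic mechanism, though you implement the cofinality step by \emph{extending} $H$ to a cofinal family $H_n$ of increasing slope and discarding low-action generators, whereas the paper (for \eqref{irilem}) works the other way around, starting from a general $G\in\mathcal{H}^{\text{Cont}}_K$ and \emph{truncating} it to $\widetilde{G}$ of slope $-L$, then using the zigzag diagram \eqref{tri} to produce a two-sided inverse. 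These are dual views of the same stabilization, and step~1 (the profile calibration, actions bounded below by $L$ via \eqref{for}) is fine.

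The genuine gap is in your step~2, where you pass from ``the new generators of $H_n$ have action $\le L$'' to ``the continuation maps $HF^{>L}(H;\Lambda)\to HF^{>L}(H_n;\Lambda)$ are isomorphisms''. Coincidence of the generating sets of $CF^{>L}(H)$ and $CF^{>L}(H_n)$ does not by itself give equality of the differentials, nor does it make the continuation map a quasi-isomorphism: you must show that Floer trajectories of $H_n$ (and continuation trajectories for a monotone homotopy from $H$ to $H_n$) with asymptotics of action $>L$ stay inside the region $K\cup(\partial K\times[0,R_n])$ where $H$ and $H_n$ agree. This is the Abouzaid--Seidel integrated maximum principle (the paper's surrogate is the appeal to Lemma~2.2 of \cite{co} in Lemma~\ref{iri}), after which the homotopy is constant on the relevant region and the continuation map is the identity on $CF^{>L}$. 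Without that confinement lemma the isomorphism claim does not follow. A smaller point: these continuation maps are action-\emph{nondecreasing}, not action-preserving, which is why $CF^{>L}$ is sent into $CF^{>L}$; the phrase ``action-preserving'' should be corrected. Once the confinement step is added, the argument is complete and consistent with both the cited source and the paper's own relative analogue.
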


In the proof of the next Lemma, we need the following algebraic fact.
\begin{theorem}[Mittag-Leffler]\label{ml} 
    Let $R$ be a ring with unit element and let $(I, \leq)$ be a directed index set. Let $\{A_i, \,r_{ij}^A : A_i \lr A_j \}_{i \in I}$, $\{B_i, \,r_{ij}^B : B_i \lr B_j \}_{i \in I}$ and $\{C_i, \,r_{ij}^C : C_i \lr C_j \}_{i \in I}$ be inverse systems of $R$-modules. Suppose that for all $i \in I$, we have a short exact sequence
    \begin{align*}
        0 \lr A_i \lr B_i \lr C_i \lr 0. 
    \end{align*}
    Suppose further that for each $i \in I$, there exists $j \geq i$ such that $r^A_{ki}(A_k) = r^A_{ji}(A_j)$ for $k \geq j$. If the index set $I$ is countable, then
    \begin{align*}
        0 \lr \varprojlim_{i \in I} A_i \lr \varprojlim_{i \in I} B_i \lr \varprojlim_{i \in I} C_i \lr 0
    \end{align*}
  is exact.
\end{theorem}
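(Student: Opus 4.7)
The plan is to split the argument into two independent parts. First, the exactness $0 \to \varprojlim A_i \to \varprojlim B_i \to \varprojlim C_i$ at the first two spots is automatic and requires no hypothesis on the system: the inverse limit functor is a right adjoint (to the constant-diagram functor) and therefore preserves kernels. Consequently, the entire content of the theorem lies in the surjectivity of the rightmost map, and this is precisely where the Mittag-Leffler condition must enter.

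For surjectivity, first reduce to $I = \NN$: since $I$ is countable and directed, it contains a cofinal chain, and both the inverse limits and the ML hypothesis are preserved when passing to it. Given a compatible family $(c_n) \in \varprojlim C_n$, pick arbitrary lifts $b'_n \in B_n$ of $c_n$. The discrepancies $\delta_n := b'_n - r^B_{n+1,n}(b'_{n+1})$ die in $C_n$ and so, using injectivity of $A_n \to B_n$, may be regarded as elements of $A_n$. Producing a compatible family $(b_n) \in \varprojlim B_n$ is then equivalent to finding corrections $\alpha_n \in A_n$ satisfying $\alpha_n - r^A_{n+1,n}(\alpha_{n+1}) = \delta_n$ for every $n$ and setting $b_n := b'_n - \alpha_n$. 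In other words, the statement reduces to the vanishing of $\varprojlim^1 A_n$ under the ML condition.

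The core step, which I expect to be the main technical obstacle, is this last vanishing. Define the stable images $A'_n := \bigcap_{k \geq n} r^A_{k,n}(A_k) \subset A_n$; by ML each intersection stabilizes at some $k = k(n)$, and a short diagram chase shows that the induced transition maps $A'_{n+1} \to A'_n$ are surjective. The corrections $\alpha_n$ are then built inductively, choosing $\alpha_{n+1}$ at each stage as a preimage in $A'_{n+1}$ of the running element $\alpha_n - \delta_n$. The delicate point is to guarantee that this running element always lies in the stable image $A'_n$, rather than merely in $A_n$, so that the surjective transition map can be used at the next step; this is achieved by first normalizing the cocycle $(\delta_n)$, absorbing its ``unstable'' components into an initial adjustment of the $b'_n$ using the identity $r^A_{k,n}(A_k) = A'_n$ for all sufficiently large $k$. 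Countability of the index set is essential here, as it permits the inductive construction to be carried out one step at a time.
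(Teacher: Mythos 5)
The paper does not prove this theorem; it is stated and used as a standard fact from homological algebra (the Mittag--Leffler criterion for the vanishing of $\varprojlim^1$). So there is no ``paper's proof'' to compare against, and the relevant question is simply whether your sketch is sound.

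Your argument is the standard one and its structure is correct: left exactness of $\varprojlim$ is automatic (right adjoint to the diagonal functor), the reduction from a countable directed set to a cofinal chain is legitimate and preserves both the limits and the ML condition, and the reduction of surjectivity to the vanishing of $\varprojlim^1 A_n$ for the tower $(A_n)$ is the right framing. The definition of the stable images $A'_n$ and the observation that the induced transitions $A'_{n+1}\to A'_n$ are surjective are also correct.

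The one place where the sketch is substantively underdeveloped is the ``normalization'' step. As you note, the inductive choice of $\alpha_{n+1}\in A'_{n+1}$ requires the running element $\alpha_n-\delta_n$ to lie in $A'_n$, and for that one must first arrange $\delta_n\in A'_n$. Your phrase ``absorbing its unstable components into an initial adjustment of the $b'_n$'' gestures at the right idea but does not say how such an adjustment is produced, and the existence of the adjustment is precisely the nontrivial point (it amounts to $\varprojlim^1$ of the quotient tower $A_n/A'_n$ vanishing). The cleanest way to fill this in is to first pass to a further cofinal subsequence $n_1<n_2<\cdots$ chosen so that each $r^A_{n_{k+1},n_k}$ already has image equal to $A'_{n_k}$ (possible by ML, using countability once more); after this reindexing the quotient transitions $A_{n_{k+1}}/A'_{n_{k+1}}\to A_{n_k}/A'_{n_k}$ are zero, so setting $\gamma_n:=\delta_n$ replaces each $\delta_n$ by $r^A_{n+1,n}(\delta_{n+1})\in A'_n$, after which your induction runs with no further issues. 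Equivalently one may invoke the six-term $\varprojlim/\varprojlim^1$ exact sequence for $0\to A'_n\to A_n\to A_n/A'_n\to 0$, noting $\varprojlim^1 A'_n=0$ by surjectivity and $\varprojlim^1(A_n/A'_n)=0$ because the quotient tower is pro-zero. Either way, with this detail supplied, your proof is complete.
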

Before we state and prove an analogous statement of Theorem \ref{i}, let us introduce some terminology: Let $H\in \mathcal{H}_K^{\text{Cont}}$ and $x \in \mathcal{P}(H)$. We call $x$ a \textit{lower orbit} if it is a constant orbit inside $K$ or it is a nonconstant orbit coming from the convex part of $H$. We call $x$ an \textit{upper orbit} if it is a constant orbit outside $K$ or it is a nonconstant orbit coming from the concave part of $H$. 

\begin{lemma}\label{iri}
    Let $(M,\omega)$ be a closed symplectically aspherical symplectic manifold and $K\subset M$ be a Liouville domain with index-bounded boundary. For each $L<0$ such that $-L$ is not an element of $\text{Spec}(\partial K,\alpha)$, there exists $H \in \mathcal{H}_K^{\text{Cont}}$ and a canonical isomorphism 
    \begin{align*}
        \eta :  HF^{\leq 0}(H;\Lambda) \xlongrightarrow{\cong}  SH^{>L}_M(K;\Lambda).
    \end{align*}
\end{lemma}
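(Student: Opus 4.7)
The plan is to adapt Irie's proof of Theorem \ref{i} to the closed ambient setting. Since $-L\notin\mathrm{Spec}(\partial K,\alpha)$ and, by index-boundedness of $\partial K$, the Reeb spectrum is locally finite below any fixed level, I can choose a slope $\beta>-L$ with $\beta\notin\mathrm{Spec}(\partial K,\alpha)$ such that the interval $(-L,\beta]$ contains no Reeb period. I would then construct $H\in\mathcal{H}_K^{\text{Cont}}$ with this linear slope and tune the convex profile $h_1$, the concave profile $h_2$, the intercept $\beta'$, and the exterior constant $c$ so that: every lower orbit (interior constant or convex-region orbit) has action at most zero; by the formula \eqref{for}, the lower orbits with action strictly greater than $L$ are precisely those corresponding to Reeb orbits of period less than $-L$; and the upper orbits (concave-region orbits and exterior constants) either have action strictly greater than zero or contribute to an acyclic piece of the filtered complex.

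Second, the map $\eta$ would be constructed using the twist $[x,\widetilde x]\mapsto T^{\mathcal A_H([x,\widetilde x])}[x,\widetilde x]$ from Remark \ref{morse}(a), which gives a chain isomorphism $CF(H)\ton\Lambda\cong CF_w(H)\ton\Lambda$ preserving the action filtration. Coupled with the natural continuation map into the colimit, it produces a cochain map $CF^{\leq 0}(H)\ton\Lambda\to\varinjlim_n CF_w^{>L}(H_n)\ton\Lambda$ for a cofinal family $\{H_n\}\subset\mathcal{H}_K^{\text{Cont}}$ with $H_1=H$ and slopes $\beta_n\nearrow\infty$. The calibration from the first step implies that every generator of $CF_w(H_{n+1})$ that is new compared to $CF_w(H_n)$ corresponds to a Reeb orbit of period exceeding $\beta_n$, whose weighted action lies at or below $L$; hence the continuation maps are quasi-isomorphisms on the $>L$ part, and the colimit stabilizes at $H$.

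Finally, to pass to the completed colimit defining $SH_M^{>L}(K;\Lambda)$, I would invoke Theorem \ref{ml}. The index-boundedness hypothesis supplies the degreewise finiteness needed to verify the Mittag-Leffler condition on the inverse system $\{CF_w^{>L}(H_n)\ton\Lambda_{\geq 0}/\Lambda_{\geq r}\}_r$, whereupon completion commutes with cohomology and yields the canonical isomorphism $\eta$. The principal obstacle, absent from Irie's Liouville argument, is the control of the concave region: in the closed ambient setting, it produces upper Hamiltonian orbits whose actions can straddle zero, and the careful tuning of $h_2$ and $c$ so that such orbits do not spuriously contribute to $HF^{\leq 0}(H;\Lambda)$ is where the gap $(-L,\beta]\cap\mathrm{Spec}(\partial K,\alpha)=\emptyset$ plays its essential role.
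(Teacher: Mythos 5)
Your high-level plan (Irie-style stabilization plus the weight twist of Remark \ref{morse}(a) plus Mittag--Leffler) points in the right direction, but there is a genuine gap in how you handle the upper orbits, and a misattribution of where index-boundedness enters.

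The concave region and the exterior constant orbits cannot be ``tuned away'': every $G\in\mathcal{H}_K^{\text{Cont}}$ has upper orbits, and since each upper orbit has action $>0>L$, they \emph{do} sit inside $CF_w^{>L}(G)$ for every $G$. In particular the continuation maps $CF_w^{>L}(H_n)\to CF_w^{>L}(H_{n+1})$ are \emph{not} quasi-isomorphisms, because $H_{n+1}$ has strictly more upper-orbit generators, all landing in $CF_w^{>L}$. Your claim that ``the colimit stabilizes at $H$'' therefore fails on the nose, and the gap condition $(-L,\beta]\cap\mathrm{Spec}(\partial K,\alpha)=\emptyset$ does nothing to fix it: it governs the lower (convex-region) orbits near the linear part, not the upper ones. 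The paper instead isolates the upper orbits as a subcomplex $CF_{w,\uparrow}(G)\subset CF_w^{>L}(G)$ (possible because the weighted differential increases action and there are no trajectories from upper to lower orbits), forms the short exact sequence $0\to CF_{w,\uparrow}(G)\to CF_w^{>L}(G)\to CF_{w,\downarrow}^{>L}(G)\to 0$, pushes it through the direct limit and completion, and then crucially invokes Lemma 5.3 of \cite{dgpz} to conclude that the completed colimit of the upper-orbit subcomplex vanishes, and Lemma 5.4 of \cite{dgpz} to conclude that $\varinjlim CF_{w,\downarrow}^{>L}(G)$ is already complete. Only at that point does the Irie-type stabilization on the quotient complex $CF_{w,\downarrow}^{>L}$ give the isomorphism with $HF^{\leq 0}(H;\Lambda)$. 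Without the subcomplex/quotient decomposition and the vanishing of the upper piece, your argument has no mechanism for removing the upper orbits from the completed colimit.

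Relatedly, index-boundedness is not what verifies the Mittag--Leffler condition. In the paper that condition holds for elementary reasons: the tower maps $A\ton\Lambda_{\geq 0}/\Lambda_{\geq r'}\to A\ton\Lambda_{\geq 0}/\Lambda_{\geq r}$ are surjective by right-exactness of the tensor product, once one passes to a countable cofinal subsystem of $\{r>0\}$. Index-boundedness is the hypothesis required for the cited \cite{dgpz} lemmas, i.e.\ precisely for the step you are missing. Your choice of a slope $\beta>-L$ avoiding a spectral gap rather than slope exactly $-L$ is harmless, but it does not compensate for these omissions.
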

\begin{proof}
Choose $\delta>0$ small enough so we can consider a neighborhood $\partial K \times [0,\delta]$ of $\partial K$ in $M$. Define $H :S^1 \times M \to \RR$ to satisfy
\begin{itemize}
    \item $H$ is negative and $C^2$-small on $S^1 \times K$,\\
    \item $H(t,p,\rho) $ is $C^2$-close to $ h_1(e^{\rho})$ on $S^1 \times (\partial K \times [0, \frac{1}{3}\delta])$ for some strictly convex and increasing function $h_1$,\\
    \item $H(t,p,\rho) =  -L e^{\rho} +L'$ on $S^1 \times (\partial K \times [\frac{1}{3}\delta, \frac{2}{3}\delta])$ for some $L' \in \RR$,\\
    \item $H(t,p,\rho) $ is $C^2$-close to $ h_2(e^{\rho})$ on $S^1 \times (\partial K \times [\frac{2}{3}\delta, \delta])$ for some strictly concave and increasing function $h_2$, and\\
    \item $H$ is $C^2$-close to a constant function on $S^1 \times (M - (K \cup (\partial K \times [0,\delta])))$. \\
\end{itemize}
Then obviously $H \in \mathcal{H}_K^{\text{Cont}}$. Given that $\delta >0$ is sufficiently small, we can assure that the actions of lower orbits of $H$ (first and second bullet points) are greater than $L$ and actions of upper orbits of $H$ (fourth and fifth bullet points) are greater than 0 considering the formula given by \eqref{for}. For $G \in \mathcal{H}_K^{\text{Cont}}$, let
\begin{align*}
 CF_{w,\uparrow}(G) = \left\{x\in CF_w(G) \bigmid x\,\,\text{is an upper orbit of}\,\, G \right\}.   
\end{align*}
Because the Floer differential of $CF_w(G)$ increases the actions, there are no Floer trajectories from upper orbits to lower orbits and hence the subset $CF_{w,\uparrow}(G)$ is actually a subcomplex of $CF_{w}(G)$. Moreover, $CF_{w,\uparrow}(G)$ is a subcomplex of $CF^{>L}_{w}(G)$ because every upper orbit of $G$ has action greater than 0 and thus greater than $L$. Let
\begin{align*}
    CF^{>L}_{w,\downarrow}(G) = CF^{>L}_{w}(G)/CF_{w,\uparrow}(G).
\end{align*}
Note that it is proved in \cite{co} (Lemma 2.2) that every Floer trajectory connecting lower orbits lies inside $K \cup ( \partial K \times [0, \frac{1}{3} \delta])$. The short exact sequence 
\begin{align*}
    0 \lr CF_{w,\uparrow}(G)\lr CF^{>L}_w(G)\lr CF^{>L}_{w,\downarrow}(G) \lr 0
\end{align*}
induces 
\begin{align*}
    0 \lr \varinjlim_{G \in \mathcal{H}_K^{\text{Cont}}} CF_{w,\uparrow}(G)\lr \varinjlim_{G \in \mathcal{H}_K^{\text{Cont}}} CF^{>L}_w(G)\lr \varinjlim_{G \in \mathcal{H}_K^{\text{Cont}}} CF^{>L}_{w,\downarrow}(G) \lr 0.
\end{align*}
because the direct limit is an exact functor. Note that $CF^{>L}_{w,\downarrow}(G)$ is a free module over $\Lambda_{\geq 0}$ generated by lower orbits of $G$ with action greater than $L$ and hence it is a flat module over $\Lambda_{\geq0}$. Since the direct limit of flat modules is known to be flat, we have
\begin{align*}
    \text{Tor}_1^{\Lambda_{\geq 0}}\left(\varinjlim_{G \in \mathcal{H}_K^{\text{Cont}}}CF^{>L}_{w}(G), \Lambda_{\geq 0}/ \Lambda_{\geq r}\right) = 0
\end{align*}
and hence
\begin{align*}
    0 \lr \varinjlim_{G \in \mathcal{H}_K^{\text{Cont}}} CF_{w,\uparrow}(G) \ton \Lambda_{\geq0}/\Lambda_{\geq r}&\lr \varinjlim_{G \in \mathcal{H}_K^{\text{Cont}}}CF^{>L}_w(G)\ton \Lambda_{\geq0}/\Lambda_{\geq r}\\&\lr \varinjlim_{G \in \mathcal{H}_K^{\text{Cont}}}CF^{>L}_{w,\downarrow}(G)\ton \Lambda_{\geq0}/\Lambda_{\geq r} \lr 0
\end{align*}
is still exact for each $r\geq0$. For $r' > r$, the projection
\begin{align}\label{mlex}
    \Lambda_{\geq 0}/ \Lambda_{\geq r'} \to \Lambda_{\geq 0}/ \Lambda_{\geq r}
\end{align}
is surjective. For any $\Lambda_{\geq 0}$-module $A$, the map induced by \eqref{mlex} $$A \ton \Lambda_{\geq 0}/ \Lambda_{\geq r'} \to A \ton \Lambda_{\geq 0}/ \Lambda_{\geq r}$$ induced by \eqref{mlex} is also surjective due to the right exactness of tensor product. Then by the Mittag-Leffler theorem for the inverse limit, the following sequence
\begin{align*}
    0 \lr \varprojlim_{r\to0}\varinjlim_{G \in \mathcal{H}_K^{\text{Cont}}} CF_{w,\uparrow}(G) \ton \Lambda_{\geq0}/\Lambda_{\geq r}&\lr\varprojlim_{r\to0} \varinjlim_{G \in \mathcal{H}_K^{\text{Cont}}}CF^{>L}_w(G)\ton \Lambda_{\geq0}/\Lambda_{\geq r}\\&\lr \varprojlim_{r\to0} \varinjlim_{G \in \mathcal{H}_K^{\text{Cont}}}CF^{>L}_{w,\downarrow}(G)\ton \Lambda_{\geq0}/\Lambda_{\geq r} \lr 0
\end{align*}
is also exact. Note that the index set $\{r>0\}$ is not countable so the Mittag-Leffler theorem does not apply directly. But we have a countable cofinal index subset $$\left\{\frac{1}{n} \bigmid n=1,2,3,\cdots\right\}$$ of $\{r>0\}$ which satisfies the assumption of Theorem \ref{ml}. By the definition of the completion, we have the following short exact sequence

\begin{align}\label{lmc}
    0 \lr \widehat{\varinjlim_{G \in \mathcal{H}_K^{\text{Cont}}}} CF_{w,\uparrow}(G)\lr \widehat{\varinjlim_{G \in \mathcal{H}_K^{\text{Cont}}}} CF^{>L}_w(G)\lr \widehat{\varinjlim_{G \in \mathcal{H}_K^{\text{Cont}}}} CF^{>L}_{w,\downarrow}(G) \lr 0.
\end{align}
We can make a couple of modifications of \eqref{lmc}. First, it is proved in \cite{dgpz} (Lemma 5.3) that the leftmost chain complex of \eqref{lmc} is zero, so the short exact sequence \eqref{lmc} becomes an isomorphism of chain complexes
\begin{align*}
    \widehat{\varinjlim_{G \in \mathcal{H}_K^{\text{Cont}}}} CF^{>L}_w(G) \cong \widehat{\varinjlim_{G \in \mathcal{H}_K^{\text{Cont}}}} CF^{>L}_{w,\downarrow}(G).
\end{align*}
One more alteration of \eqref{lmc} is that we can get rid of the hat symbol over $\displaystyle\varinjlim_{G \in \mathcal{H}_K^{\text{Cont}}} CF^{>L}_{w,\downarrow}(G)$ because $\displaystyle\varinjlim_{G \in \mathcal{H}_K^{\text{Cont}}} CF^{>L}_{w,\downarrow}(G)$ is complete, which is also proved in \cite{dgpz} (Lemma 5.4) and hence
\begin{align*}
    \widehat{\varinjlim_{G \in \mathcal{H}_K^{\text{Cont}}}} CF^{>L}_{w,\downarrow}(G) \cong \varinjlim_{G \in \mathcal{H}_K^{\text{Cont}}} CF^{>L}_{w,\downarrow}(G).
\end{align*}
We claim that 
\begin{align}\label{irilem}
     H \left( CF_{w, \downarrow}(H) \right) \cong  H \left( \varinjlim_{\substack{G\in \mathcal{H}_K^{\text{Cont}}}} CF^{>L}_{w,\downarrow}(G) \right).
\end{align}
A similar isomorphism is established in \cite{i}, so the proof of the claim \eqref{irilem} is inspired by the proof of Lemma 2.5 of \cite{i}. For $G \in \mathcal{H}_K^{\text{Cont}} $, denote the slope of the linear part of $G$ by $s_G$. Let
\begin{align*}
    \mathcal{H}_K^{\text{Cont},\leq -L} =\left\{ G \in \mathcal{H}_K^{\text{Cont}} \bigmid s_G \leq -L \right\}.
\end{align*}
Since $H \in \mathcal{H}_K^{\text{Cont},\leq -L}$, there exists a morphism 
\begin{align}\label{1iso}
    H\left( CF_{w,\downarrow}(H) \right) \lr H\left( \varinjlim_{G \in \mathcal{H}_K^{\text{Cont},\leq -L}}  CF_{w,\downarrow}(G)\right).
\end{align}
Let $\{G_k\}$ be a cofinal sequence of $\mathcal{H}_K^{\text{Cont},\leq -L}$. Note that we may assume that $G_k$ is $C^2$-small on $S^1 \times K$ for every $k=1,2,3,\cdots$.
We can construct the following commutative diagram
\begin{align}\label{tri}
    \includegraphics[scale=1.2]{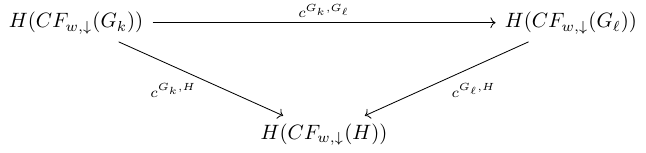}
\end{align}
where every map in \eqref{tri} is induced by the continuation map. By the universal property of the direct limit, we have a morphism
\begin{align*}
   H \left( \varinjlim_{G \in \mathcal{H}_K^{\text{Cont},\leq -L}} CF_{w,\downarrow}(G)\right) \cong H \left( \varinjlim_{k\to \infty} CF_{w,\downarrow}(G_k)\right) \lr  H\left( CF_{w,\downarrow}(H) \right),
\end{align*}
which is the inverse of the map \eqref{1iso} and hence
\begin{align}\label{s1}
    H\left( CF_{w,\downarrow}(H) \right) \cong H \left( \varinjlim_{G \in \mathcal{H}_K^{\text{Cont},\leq -L}} CF_{w,\downarrow}(G)\right).
\end{align}
Additionally, we can easily deduce that $$CF_{w,\downarrow}(G_k) = CF_{w,\downarrow}^{>L}(G_k)$$ for each $k=1,2,3,\cdots$ because $G_k$ is $C^2$-small on $S^1 \times K$ and every nonconstant orbit of $G_k$ has action greater than $L$ due to the restriction on $s_{G_k}$. So,
\begin{align}\label{s2}
    H \left( \varinjlim_{G \in \mathcal{H}_K^{\text{Cont},\leq -L}} CF_{w,\downarrow}(G)\right) = H\left(\varinjlim_{G \in \mathcal{H}_K^{\text{Cont},\leq -L}} CF_{w,\downarrow}^{>L}(G)\right).
\end{align}
Let $G \in \mathcal{H}_K^{\text{Cont}}$ with $s_G > -L$ and $G$ being $C^2$-small on $S^1 \times K$. We can choose a small number $\eta >0$ such that $G$ is given by a strictly convex and increasing function $g_1$ on $\partial K\times[0,\frac{1}{3}\eta]$, linear on $\partial K\times[\frac{1}{3}\eta,\frac{2}{3}\eta]$ and a strictly concave function on $K\times[\frac{2}{3}\eta,\eta]$. We can find $\rho_0 \in (0,\frac{1}{3}\eta)$ such that $g_1'(e^{\rho_0}) = -L$. 
Define $\widetilde{G} \in \mathcal{H}_K^{\text{Cont},\leq -L}$ by
\begin{itemize}
    \item $\widetilde{G} = G$ on $K \cup (\partial K \times [0,\rho_0])$,\\
    \item $\widetilde{G}(t,p,\rho) = -Le^{\rho}+L''$ on $S^1 \times (\partial K \times [\rho_0, 2 \rho_0])$ for some $L''\in\RR$,\\
    \item $\widetilde{G}(t,p,\rho) $ is $C^2$-close to $ g_2(e^\rho)$ for some strictly concave and increasing function $g_2$ on $\partial K \times [2\rho_0,3\rho_0]$, and\\
    \item $\widetilde{G}$ is $C^2$-close to a constant function on $S^1 \times (M - (K \cup (\partial K \times [0,3\rho_0])))$. \\
\end{itemize}
Then we have an isomorphism $$H(CF_{w,\downarrow}^{>L}(G)) \cong H(CF_{w,\downarrow}^{>L}(\widetilde{G}))$$ and therefore
\begin{align}\label{s3}
   H\left( \varinjlim_{G \in \mathcal{H}_K^{\text{Cont},\leq -L}} CF_{w,\downarrow}^{>L}(G)\right) \cong H\left( \varinjlim_{G \in \mathcal{H}_K^{\text{Cont}}} CF_{w,\downarrow}^{>L}(G)\right).
\end{align}
Putting \eqref{s1}, \eqref{s2} and \eqref{s3} together, we prove the claim \eqref{irilem}. From the fact that every upper orbit of $H$ has action greater than 0, we have
\begin{align}\label{remove}
    H\left( CF_{w,\downarrow}(H)  \ton \Lambda \right) \cong HF^{\leq 0}(H ;\Lambda).
\end{align}
Note that we can remove subscript $_w$ in \eqref{remove} by (a) of Remark \ref{morse}.
The core of the preceding discussion is captured by the following chain of isomorphisms.
\begin{align*}
    SH^{>L}_M(K;\Lambda) &= H\left( \widehat{\varinjlim_{G \in \mathcal{H}_K^{\text{Cont}}}} CF^{>L}_w(G) \ton \Lambda \right) &&\text{Definition}\\
    &\cong H\left( \widehat{\varinjlim_{G \in \mathcal{H}_K^{\text{Cont}}}} CF^{>L}_{w,\downarrow}(G) \ton \Lambda \right) && \text{Ignore upper orbits}\\
   &\cong H\left(  \varinjlim_{G \in \mathcal{H}_K^{\text{Cont}}} CF^{>L}_{w,\downarrow}(G) \ton \Lambda\right)&&\varinjlim_{G \in \mathcal{H}_K^{\text{Cont}}} CF^{>L}_{w,\downarrow}(G)\,\,\text{is complete}\\
&\cong H\left( CF_{w,\downarrow}(H) \ton \Lambda  \right) &&\text{By \eqref{irilem}}\\
&\cong HF^{\leq 0}\left( H; \Lambda  \right)&&\text{By}\,\, \eqref{remove}
   \end{align*}
\end{proof}
Note that for a nondegenerate Hamiltonian function $H : S^1 \times K \to \RR$ and $a,b \in \RR$ with $a\leq b$, there exists a map
\begin{align}\label{view}
    \pi^{H, a,b} : HF^{\leq b}(H) \lr HF^{\leq a}(H)
\end{align}
induced by the map
\begin{align*}
    CF(H) / CF^{>b}(H) \lr CF(H) / CF^{>a}(H). 
\end{align*}
Of course, for $a\leq b$, we have
\begin{align}\label{view2}
    \pi^{H,\leq a} = \pi^{H,a,b} \circ \pi^{H,\leq b}.
\end{align}
Thus far we have gathered all the ingredients that we need to prove the following theorem. The idea of the proof is inspired by \cite{bk}.
\begin{theorem}\label{mcomparison}
    Let $(M,\omega)$ be a closed symplectically aspherical symplectic manifold and $K\subset M$ be a Liouville domain with index-bounded boundary. Then
    \begin{align*}
        c^S(M,K) \leq c^{SH}(M,K).
    \end{align*}
\end{theorem}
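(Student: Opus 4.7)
The plan, following the template of \cite{bk}, is to prove that $\rho(1_M; H) \geq L$ for every admissible Hamiltonian $H$ in the definition of $c^S(M,K)$ and every $L < 0$ with $-L \notin \text{Spec}(\partial K, \alpha)$ satisfying $j^L([K, \partial K]) = 0$. Since the supremum of such $L$ is $-c^{SH}(M,K)$ by definition, this would yield $\rho(1_M; H) \geq -c^{SH}(M,K)$, and the supremum over $H$ gives $c^S(M,K) \leq c^{SH}(M,K)$.

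Fix $L$ as above. I would first invoke Lemma \ref{iri} to obtain a contact-type $K$-admissible Hamiltonian $G$ of slope $-L$ with its canonical isomorphism $\eta: HF^{\leq 0}(G; \Lambda) \xrightarrow{\cong} SH^{>L}_M(K; \Lambda)$; the two free parameters in the construction of $G$ (the collar width $\delta$ and the $C^2$-size of the negative bump on $K$) may be made arbitrarily small, and I would shrink them so that $\max_M G \leq -L$. Setting $\widetilde H := H + G$, the sum still lies in $\mathcal H_K^{\text{Cont}}$ with the same slope $-L$, since $H$ vanishes near $\partial K$ and outside $K$, and the proof of Lemma \ref{iri} adapts to furnish a corresponding canonical isomorphism $\eta_{\widetilde H}: HF^{\leq 0}(\widetilde H; \Lambda) \xrightarrow{\cong} SH^{>L}_M(K; \Lambda)$.

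The crucial step, and the one I expect to be the principal obstacle, is the identification
\[
\eta_{\widetilde H}\bigl(\pi^{\widetilde H, \leq 0}(PSS^{\widetilde H}(1_M))\bigr) = j^L([K, \partial K]).
\]
Its geometric content is that $CF^{\leq 0}(\widetilde H)$ is supported on the lower orbits of $\widetilde H$ inside $K$, whose cohomology, by Remark \ref{morse}(c) together with Lemma \ref{onemin} and Corollary \ref{pinch}, is identified with $H_{2n - *}(K, \partial K; \Lambda)$; the truncated PSS class corresponds, under this identification, to the image of $1_M \in QH^0(M; \Lambda)$ under the pinch map of Corollary \ref{pinch}, namely $[K, \partial K]$; and the comparison of the lower-orbit cohomology with $SH^{>L}_M(K; \Lambda)$ through the cofinal direct-limit system is, by construction, the map $j^L$ from the exact triangle of Theorem \ref{ing}. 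Making this precise requires a careful diagram chase through the isomorphisms underlying the proof of Lemma \ref{iri}, comparing PSS continuation cycles against the cofinal family defining relative symplectic cohomology, and treating the action filtration bookkeeping for orbits of $\widetilde H$ whose actions may extend below $L$.

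Once the identification is in place, the hypothesis $j^L([K, \partial K]) = 0$ yields $\pi^{\widetilde H, \leq 0}(PSS^{\widetilde H}(1_M)) = 0$, hence $\rho(1_M; \widetilde H) \geq 0$. Applying the Lipschitz property \eqref{lp} to the pair $(H, \widetilde H)$ and using $\widetilde H - H = G$ gives
\[
\rho(1_M; H) \;\geq\; \rho(1_M; \widetilde H) - \max_M G \;\geq\; -\max_M G \;\geq\; L,
\]
as required.
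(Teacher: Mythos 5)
Your proposal takes essentially the same approach as the paper's proof: build a contact-type extension $\widetilde{H}$ of $H$ with slope $-L$, invoke the isomorphism $HF^{\leq 0}(\widetilde{H};\Lambda)\cong SH^{>L}_M(K;\Lambda)$ from Lemma \ref{iri}, identify the truncated PSS image of $1_M$ with $j^L([K,\partial K])$ via the map $\rho$ of Corollary \ref{pinch}, and conclude that $\rho(1_M;H)\geq L$. The only differences are cosmetic: you set $\widetilde{H}=H+G$ and use the Lipschitz property \eqref{lp} to transfer the estimate back to $H$, whereas the paper takes $\widetilde{H}=H$ on $K$ (after a small perturbation near $\partial K$) and observes that the filtered continuation map $HF^{\leq -a}(H;\Lambda)\to HF^{\leq -a}(\widetilde{H};\Lambda)$ is an isomorphism; the commutativity you flag as the principal obstacle is precisely what the paper asserts as the diagram \eqref{5gon}, so your identification is correct and you have located the genuine content of the argument.
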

\begin{proof}\
We only need to consider the case when $c^{SH}(M,K)$ is finite. Choose any $a > c^{SH}(M,K)$ such that $a \notin \text{Spec}(\partial K, \alpha)$. This is always possible because the set $\text{Spec}(\partial K, \alpha)$ is known to be a nowhere dense closed subset of $\RR$. Let $H$ be a nonpositive Hamiltonian function supported in $S^1\times (K-\partial K)$. It suffices to show that
\begin{align}\label{goal}
    -\rho(1_M;H) \leq a.
\end{align}
we can find a sufficiently small $\delta >0 $ such that $H = 0$ on $S^1 \times (\partial K \times (-\delta,0])$. By the Lipshcitz property \eqref{lp} of spectral invariants, we may assume that $H$ is nondegenerate and negative and $C^2$-small on $S^1 \times (\partial K \times (-\delta,0])$. Define $\widetilde{H} : S^1 \times M \to \RR$, in a similar way that we did in Lemma \ref{iri}, to satisfy 
\begin{itemize}
    \item $\widetilde{H} = H$ on $S^1 \times K$,\\
    \item $\widetilde{H}(t,p,\rho) $ is $C^2$-close to $h_1(e^{\rho})$ on $S^1 \times (\partial K \times [0, \frac{1}{3}\delta])$ for some strictly convex and increasing function $h_1$,\\
    \item $\widetilde{H}(t,p,\rho) =  a e^{\rho} +a'$ on $S^1 \times (\partial K \times [\frac{1}{3}\delta, \frac{2}{3}\delta])$ for some $a' \in \RR$,\\
    \item $\widetilde{H}(t,p,\rho) $ is $C^2$-close to $ h_2(e^{\rho})$ on $S^1 \times (\partial K \times [\frac{2}{3}\delta, \delta])$ for some strictly concave and increasing function $h_2$, and\\
    \item $\widetilde{H}$ is $C^2$-close to a constant function on $S^1 \times (M -( K \cup (\partial K \times [0,\delta])))$. \\
\end{itemize}
As we saw in the proof of Lemma \ref{iri}, there exists an isomorphism
\begin{align}\label{ii2}
   \eta: HF^{\leq 0}(\widetilde{H};\Lambda) \cong SH^{>-a}_M(K;\Lambda).
\end{align}
Now let's consider the following commutative diagram.
\begin{align}\label{first}
    \includegraphics[scale=1.2]{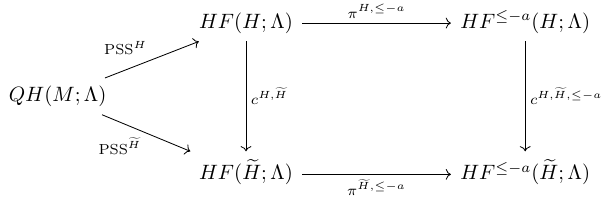}
\end{align}
Note that the second vertical map 
\begin{align*}
    c^{H,\widetilde{H},\leq -a} : HF^{\leq -a}(H;\Lambda) \lr HF^{\leq -a} (\widetilde{H};\Lambda)
\end{align*}
in \eqref{first} is induced by the continuation map $c^{H,\widetilde{H}}$ and it is an isomorphism because every Hamiltonian orbit of $H$ with action less than or equal to $-a$ occurs where $\{H = \widetilde{H}\}$ and every Hamiltonian orbit of $\widetilde{H}$ with action less than or equal to $-a$ also occurs on the same region. Hence, to see \eqref{goal}, we will show that 
\begin{align}\label{goal2}
    \pi^{\widetilde{H},\leq-a} \circ \text{PSS}^{\widetilde{H}}(1_M) = 0.
\end{align}
But in view of the map \eqref{view} and the relation \eqref{view2}, our goal \eqref{goal2} can be reformulated as 
\begin{align}\label{goal3}
    \pi^{\widetilde{H},\leq 0} \circ \text{PSS}^{\widetilde{H}}(1_M) = 0.
\end{align}
To achieve our newly set goal \eqref{goal3}, commutative diagram below is helpful.
\begin{align}\label{5gon}
    \includegraphics[scale=1.2]{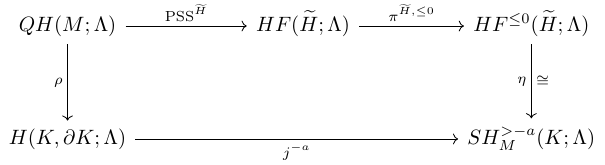}
\end{align}
The first vertical map of \eqref{5gon} is the map given by \eqref{mtok} and the second vertical map is the isomorphism $\eta$ given by \eqref{ii2}. Lastly, the map in the second row is given in Theorem \ref{ing}. Since $a > c^{SH}(M,K)$, we have $j^{-a} ([K, \partial K]) = 0$ and therefore
\begin{align*}
    \eta \left( \pi^{\widetilde{H},\leq 0} \circ \text{PSS}^{\widetilde{H}}(1_M) \right)= j^{-a}\circ\rho(1_M) = j^{-a}([K, \partial K]) = 0.
\end{align*}
Since $\eta$ is an isomorphism, we have $\pi^{\widetilde{H},\leq 0} \circ \text{PSS}^{\widetilde{H}}(1_M) = 0$. The essence of this proof is summarized in the commutative diagram below.
\begin{align*}
    \includegraphics[scale=1.2]{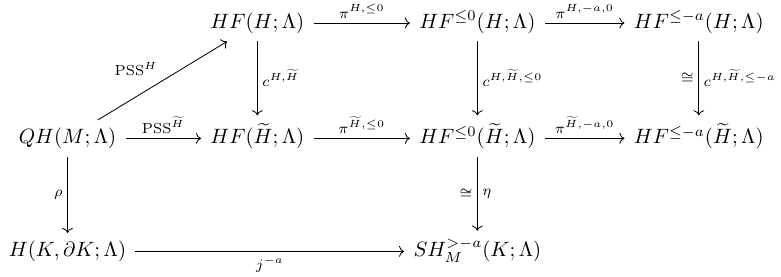}
\end{align*}

\end{proof}
\medskip
\subsection{Comparison of $c_1^{GH}(M,K)$ with $c^{SH}(M,K)$}
Let us remind the Gysin-type exact triangle regarding relative symplectic cohomologies constructed in \cite{a}.
\begin{theorem}[\cite{a}]\label{rbo}
    Let $(M,\omega)$ be a closed symplectically aspherical symplectic manifold and $K\subset M$ be a Liouville domain with index-bounded boundary. Then we have the following exact triangle
    \begin{align}\label{gysin}
        \includegraphics[scale=1.2]{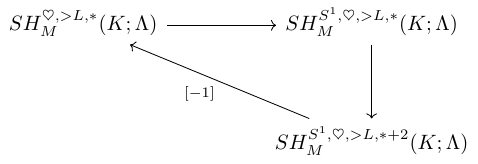}
    \end{align}
    where [-1] denotes the degree shift by $-1$ with respect to $*$ and $\heartsuit= \emptyset$ or $-$.
\end{theorem}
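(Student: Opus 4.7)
The plan is to adapt the classical Bourgeois--Oancea Gysin construction to the relative setting by producing a short exact sequence at the level of a single admissible Hamiltonian and then pushing it through the direct-limit--completion operations used to define $SH^{S^1, \heartsuit}_M(K)$.

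First, fix $H \in \mathcal{H}_K^{\text{Cont}}$. From the explicit formula
\begin{align*}
    d_w^{S^1}(u^k \otimes [x,\widetilde{x}]) = \sum_{i=0}^k u^{k-i} \otimes \psi_i([x,\widetilde{x}])
\end{align*}
the weight-zero piece $1 \otimes CF_w(H)$ is a subcomplex, with induced differential $\psi_0$ equal to the weighted Floer differential of $H$; and the bijection $u^{j+1} \otimes x \leftrightarrow v^j \otimes x$ identifies the quotient with $CF_w^{S^1}(H)[-2]$, since $|u|=-2$ and a direct check shows that the induced quotient differential has exactly the shape of $d_w^{S^1}$. This yields a short exact sequence
\begin{align*}
    0 \lr CF_w(H) \lr CF_w^{S^1}(H) \lr CF_w^{S^1}(H)[-2] \lr 0
\end{align*}
natural with respect to continuation maps. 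For $\heartsuit = -$, replace each complex by its subcomplex spanned by non-constant orbits; action-increase of Floer trajectories ensures this subcomplex is preserved by every $\psi_i$, and the same short exact sequence results.

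Second, because the direct limit functor is exact, applying $\varinjlim_{H \in \mathcal{H}_K^{\text{Cont}}}$ preserves the short exact sequence. The completion step is the delicate part but runs parallel to the argument in Lemma \ref{iri}: each of the three complexes is a direct limit of free $\Lambda_{\geq 0}$-modules and is therefore flat, so $\mathrm{Tor}_1^{\Lambda_{\geq 0}}(-, \Lambda_{\geq 0}/\Lambda_{\geq r})$ vanishes and tensoring with $\Lambda_{\geq 0}/\Lambda_{\geq r}$ preserves exactness for every $r>0$. The transition maps in the inverse system $\{-\otimes\Lambda_{\geq 0}/\Lambda_{\geq r}\}_{r>0}$ are surjective, so the Mittag-Leffler theorem (Theorem \ref{ml}), applied to the countable cofinal subfamily $\{1/n : n=1,2,\dots\}$ of $\{r>0\}$, ensures that $\varprojlim$ preserves exactness. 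Tensoring with $\Lambda$ and passing to cohomology then gives the desired Gysin-type exact triangle.

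The main obstacle is the completion step: making sure that the short exact sequence at the level of a single Hamiltonian survives both $\varinjlim_{H \in \mathcal{H}_K^{\text{Cont}}}$ and the subsequent $\varprojlim_{r \to 0}$ used in the completion. Flatness of the three terms (which delivers $\mathrm{Tor}_1$-vanishing) combined with the Mittag-Leffler theorem applied to a countable cofinal subfamily is precisely the mechanism that makes this work, in direct analogy with the technique already deployed in the proof of Lemma \ref{iri}.
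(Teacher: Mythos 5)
The paper does not prove Theorem \ref{rbo}; it is cited directly from reference \cite{a}, so there is no in-text proof with which to compare your proposal. On its own merits, your proposal follows the route one would expect: the Bourgeois--Oancea short exact sequence at the level of a single Hamiltonian, pushed through the direct-limit and completion operations exactly as the paper does in the proof of Lemma \ref{iri} (flatness of direct limits of free modules gives $\mathrm{Tor}_1$-vanishing, and surjectivity of the transition maps lets you invoke the Mittag--Leffler theorem \ref{ml} via the countable cofinal family $\{1/n\}$). The core chain-level observation --- that $1\otimes CF_w(H)$ is a $\psi_0$-subcomplex and that the quotient, under $u^{j+1}\otimes x\mapsto u^j\otimes x$, recovers $CF_w^{S^1}(H)$ with a degree-2 shift --- is correct and is the standard engine for the Gysin sequence.

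Two imprecisions worth flagging. First, for $\heartsuit = -$ you assert that the non-constant orbits span a \emph{subcomplex} and that this is preserved by the $\psi_i$. Under the action ordering described in Lemma \ref{iri} (lower non-constant orbits carry large negative action, constant orbits in $K$ carry action near zero, and the differential increases action), the non-constant orbits do not form a subcomplex in the direction you state; one needs to quotient by (or, depending on which side one is on, pass to the subcomplex of) the constant orbits. The short exact sequence still descends to the $-$ version because the inclusion $x\mapsto 1\otimes x$ and the quotient $u^{j+1}\otimes x\mapsto u^j\otimes x$ both respect the constant/non-constant decomposition, so the conclusion is unaffected, but the wording should be corrected. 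Second, the degree bookkeeping deserves care: the quotient term in the short exact sequence carries a shift by $2$ (in whichever sign your convention dictates), whereas the $[-1]$ in the theorem labels the connecting homomorphism of the resulting long exact sequence; these are different maps, and you should verify that, after passing to the long exact sequence, the degree shifts on all three maps of the triangle match the ones implicit in the statements of Lemma \ref{inj}(a) and (b). Neither issue is fatal; both are checks of conventions rather than gaps in the argument.
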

We denote the horizontal map of the exact triangle \eqref{gysin} by
\begin{align*}
    \xi^L : SH^{>L}_M(K;\Lambda) \lr SH^{S^1, >L}_M(K;\Lambda)
\end{align*}
if $\heartsuit = \emptyset$ and denote it by
\begin{align*}
    \zeta^L : SH^{-,>L}_M(K;\Lambda) \lr SH^{S^1,-, >L}_M(K;\Lambda)
\end{align*}
if $\heartsuit= -$. Note that the maps $\xi^L$ and $\zeta^L$ are induced by $x\mapsto 1\otimes x$ on the chain level.

\begin{lemma}\label{inj}
    Let $(M,\omega)$ be a $2n$-dimensional closed symplectically aspherical symplectic manifold and $K\subset M$ be a Liouville domain with index-bounded boundary. Suppose that every contractible Reeb orbit $\gamma$ of $(\partial K, \alpha)$ satisfies $\mu_{\text{CZ}} (\gamma) \geq n$.
    \begin{enumerate}[label=(\alph*)]
        \item The map 
        \begin{align*}
            \zeta^{L,-n-1} : SH^{-,>L,-n-1}_M(K;\Lambda) \lr SH^{S^1,-, >L,-n-1}_M(K;\Lambda)
        \end{align*}
        is surjective.
        \item If, furthermore, there exists a Morse function on $K$ which does not admit critical points of odd Morse index, then the map
        \begin{align*}
            \xi^{L,-n} : SH^{>L,-n}_M(K;\Lambda) \lr SH^{S^1, >L,-n}_M(K;\Lambda)
        \end{align*}
        is injective.
        
    \end{enumerate}
    
\end{lemma}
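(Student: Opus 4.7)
The plan is to apply the Gysin-type long exact sequence associated with the exact triangle of Theorem~\ref{rbo}. With the ``$[-1]$'' shift placed on the connecting map, this long exact sequence takes the form
\begin{equation*}
    \cdots \to SH^{\heartsuit,>L,k}_M(K;\Lambda) \xrightarrow{\xi^L/\zeta^L} SH^{S^1,\heartsuit,>L,k}_M(K;\Lambda) \xrightarrow{q} SH^{S^1,\heartsuit,>L,k+2}_M(K;\Lambda) \xrightarrow{\delta} SH^{\heartsuit,>L,k+1}_M(K;\Lambda) \to \cdots.
\end{equation*}
The key input from the hypothesis $\mu_{\text{CZ}}(\gamma) \geq n$ is a Morse-Bott index computation: every nonconstant Hamiltonian $1$-orbit of any contact type $K$-admissible Hamiltonian $H$ comes from a contractible Reeb orbit of $(\partial K, \alpha)$ via \eqref{hamorbit}, and the standard index formula after a Morse-Bott perturbation of the $S^1$-family yields $\mu_{\text{CZ}}(x) \geq \mu_{\text{CZ}}(\gamma) - 1 \geq n - 1$. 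Consequently every generator of $CF^-_w(H)$ has cohomological degree $\geq n - 1$, so $SH^{-,>L,k}_M(K;\Lambda) = 0$ for every $k \leq n-2$.

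\textbf{Part (a).} At $k = -n - 1$ the source $SH^{-,>L,-n-1}_M$ of $\zeta^{L,-n-1}$ already vanishes, so its surjectivity is equivalent to $SH^{S^1,-,>L,-n-1}_M(K;\Lambda) = 0$. Iterating the Gysin long exact sequence at the intermediate degrees $-n+1, -n+3, \ldots, n-3$ (where the neighbouring $SH^{-,>L,\cdot}_M$ terms all vanish by the degree bound), the map $q$ becomes an isomorphism and one obtains a chain
\begin{equation*}
    SH^{S^1,-,>L,-n-1}_M(K;\Lambda) \cong SH^{S^1,-,>L,-n+1}_M(K;\Lambda) \cong \cdots \cong SH^{S^1,-,>L,n-3}_M(K;\Lambda).
\end{equation*}
The vanishing of the right-hand group follows from a direct chain-level analysis: every generator of $CF^{S^1,-,>L}_w$ at degree $n - 3$ lies in the $u$-positive part and is a coboundary of the $S^1$-equivariant differential, exploiting the structure of the operators $\psi_i$ and the action filtration $>L$.

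\textbf{Part (b).} Apply the Gysin long exact sequence with $\heartsuit = \emptyset$; injectivity of $\xi^{L,-n}$ is equivalent to the vanishing of $\operatorname{im}(\delta \colon SH^{S^1,>L,-n+1}_M \to SH^{>L,-n}_M)$. The additional hypothesis on the Morse function implies, via Remark~\ref{morse}(c) and Lefschetz duality, that $H_{\text{odd}}(K, \partial K; \Lambda) = 0$, and hence $H^{S^1}_\ast(K, \partial K; \Lambda) = \Lambda[u] \otimes H_\ast(K, \partial K; \Lambda)$ is concentrated in a single parity. Combining this parity constraint with part~(a) and the exact triangles of Theorem~\ref{ing} (which link $H(K, \partial K; \Lambda)$ and $H^{S^1}(K, \partial K; \Lambda)$ to $SH^{>L}_M$ and $SH^{S^1,>L}_M$ respectively), a diagram chase shows that the source $SH^{S^1,>L,-n+1}_M$ of $\delta$ is controlled entirely by $H^{S^1,-n+1}(K, \partial K; \Lambda)$, which vanishes by parity.

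\textbf{Main obstacle.} The hardest step is the vanishing $SH^{S^1,-,>L,n-3}_M(K;\Lambda) = 0$ at the end of the iterated isomorphism in part~(a). This goes beyond a pure degree count and requires a chain-level argument exploiting the action filtration together with the $S^1$-equivariant differential, whose $u$-lowering components encode precisely the obstructions to $u$-exactness of the generators at that degree.
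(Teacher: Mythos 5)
Your index estimate $\mu_{\text{CZ}}(x) \geq \mu_{\text{CZ}}(\gamma) - 1 \geq n-1$ is wrong, and this error propagates through the entire argument. In the paper's conventions, $\iota_{X_H}\omega = dH$ and the Hamiltonian is negative on $K$, so on the cylindrical end $X_H(p,\rho) = -h'(e^{\rho})R(p)$ with $h' > 0$; the nonconstant Hamiltonian orbits therefore traverse Reeb orbits in the \emph{opposite} direction. Reversing a periodic orbit negates its Conley--Zehnder index, so the hypothesis $\mu_{\text{CZ}}(\gamma) \geq n$ forces $\mu_{\text{CZ}}(x) \leq -n$, not $\geq n-1$. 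Your estimate would be the one appropriate for symplectic homology with positively-sloped Hamiltonians, but that is not the setup here.

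With the correct sign the whole iterated-Gysin apparatus is unnecessary: since $\ell \geq 0$, every $S^1$-equivariant generator $u^{\ell}\otimes x$ supported on a nonconstant orbit has degree $|u^{\ell}\otimes x| = -2\ell + \mu_{\text{CZ}}(x) \leq -n$, so the complex $CF_w^{S^1,-,>L}$ vanishes identically in all degrees $\geq -n+1$, and in particular $SH^{S^1,-,>L,-n+1}_M(K;\Lambda)=0$ at the chain level. The Gysin exact sequence then gives surjectivity of $\zeta^{L,-n-1}$ in one step. With your sign there is no such termination: $-2\ell + \mu_{\text{CZ}}(x)$ can be arbitrarily negative, so the $S^1$-equivariant groups do not vanish at any endpoint of your chain of isomorphisms, and the promised ``direct chain-level analysis'' at degree $n-3$ has no basis. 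For part (b) the situation is analogous: the paper's argument is a chain-level vanishing $CF_w^{S^1,>L,-n+1}(H_k)=0$ obtained by combining the bound $\mu_{\text{CZ}}(x)\leq -n$ for nonconstant orbits with the parity observation $\text{ind}(x;H_k)$ even (hence $\mu_{\text{CZ}}(x) = \text{ind}(x;H_k)-n$ cannot equal $-n + 2\ell + 1$) for constant orbits of a suitably chosen cofinal sequence. Your ``diagram chase'' appeal to parity of $H^{S^1}_*(K,\partial K;\Lambda)$ does not establish the required vanishing and is not the mechanism the paper uses.
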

\begin{proof}
    (a) Magnifying the part that we need from the exact triangle \eqref{gysin}, we have
    \begin{align*}
    \cdots\lr SH^{-,>L,-n-1}_M(K;\Lambda) \xlongrightarrow{\zeta^{L,-n-1}} SH^{S^1,-,>L,-n-1}_M(K;\Lambda) \lr SH^{-,>L,-n+1}_M(K;\Lambda) \lr \cdots. 
    \end{align*}
    To prove that $\zeta^{L,-n-1}$ is surjective, we shall show that
    \begin{align}\label{sur}
        SH^{-,>L,-n+1}_M(K;\Lambda) = 0.
    \end{align}
    Let $H \in \mathcal{H}_K^{\text{Cont}}$ and let $u^{\ell} \otimes x \in \Lambda_{\geq 0}[u] \otimes CF_w(H)$ be a generator of $ CF_w^{S^1,-,>L,-n+1}(H)$. Note that $x$ is a nonconstant Hamiltonian orbit $H$. Then by the grading convention given in \eqref{grcon}, we have
    \begin{align*}
        | u^{\ell} \otimes x| = -2\ell + \mu_{\text{CZ}}(x) = - n +1
    \end{align*}
    and hence
    \begin{align}\label{cont1}
        \mu_{\text{CZ}}(x) = - n + 2 \ell +1.
    \end{align}
    Since the Hamiltonian orbit $x$ travels in the opposite direction of Reeb orbits on $(\partial K, \alpha)$ as explained in \eqref{hamorbit}, the Conley-Zehnder index of $x$ satisfies
    \begin{align}\label{cont2}
        \mu_{\text{CZ}}(x) \leq -n
    \end{align}
    by the assumption. Combining \eqref{cont1} and \eqref{cont2}, we obtain the inequality
    \begin{align*}
        2 \ell + 1 \leq 0,
    \end{align*}
    which is impossible. The preceding discussion implies that
    \begin{align*}
        CF_w^{S^1,-,>L,-n+1}(H) = 0
    \end{align*}
    for every $H \in \mathcal{H}_K^{\text{Cont}}$ and thus 
    \begin{align*}
         SH^{-,>L,-n+1}_M(K) = H\left(\widehat{\varinjlim_{H \in \mathcal{H}^{\text{Cont}}_K}} CF_w^{S^1,-,>L,-n+1}(H)\right) = 0,
    \end{align*}
    which proves \eqref{sur}.\\
    
    \noindent
    (b) Let $f' : M \to \RR$ be a Morse function on $K$ without critical points of odd Morse index. We may assume that $f'$ is negative because we can translate it when needed. Also, $f'$ can be assumed to be $C^2$-small by multiplying sufficiently small positive number if needed. Consider a small neighborhood $\partial K \times (-\delta,\delta)$ of $\partial K$. There exists a smooth bump function $\psi : K \cup (\partial K \times (-\delta,\delta)) \to [0,1]$ defined by
    \begin{align*}
        \psi = \begin{cases}
            1 &\text{on} \,\,K - (\partial K \times (-\delta,0])\\
            0 &\text{on} \,\, \partial K\times [0,\delta)\\
            \end{cases}
    \end{align*}
    Then $\text{Crit}(\psi f') = \text{Crit}(f)$ and $\partial K$ is a level set of $\psi f'$. Denoting $\psi f'$ by $f$, the function $f : K \to \RR$ satisfies that 
    \begin{itemize}
        \item it is negative and $C^2$-small,\\
        \item it has no critical point of odd Morse index, and\\
        \item $f|_{\partial K} = 0$.\\
    \end{itemize}
    Let $\{H_k\}$ be a cofinal sequence of $\mathcal{H}_K^{\text{Cont}}$. Moreover, we may assume that $$H_k(t,p) = \frac{1}{k} f(p)$$ on $S^1 \times K$. Then every Hamiltonian orbit of $H_k$ on $K$ is a critical point of $f$. Extracting the part that we need in the exact triangle \eqref{gysin}, we have
    \begin{align*}
        \cdots \lr SH^{S^1,>L,-n+1}_M(K;\Lambda) \lr SH^{>L,-n}_M(K;\Lambda) \xlongrightarrow{\xi^{L,-n}} SH^{S^1,>L,-n}_M(K;\Lambda) \lr \cdots.
    \end{align*}
    To prove that $\xi^{L,-n}$ is injective, it suffices to show that 
    \begin{align}\label{mola}
     SH^{S^1,>L,-n+1}_M(K;\Lambda) = 0.   
    \end{align}
    Let 
    \begin{align*}
        u^{\ell} \otimes x \in \Lambda_{\geq 0}[u] \otimes CF_w(H_k)
    \end{align*}
    be any generator of $ CF_w^{S^1,>L,-n+1}(H_k)$. Then     \begin{align*}
        |u^{\ell} \otimes x| = -2\ell + \mu_{\text{CZ}}(x) = -n+1
    \end{align*}
    and hence 
    \begin{align}\label{odd}
        \mu_{\text{CZ}}(x) = -n +2\ell +1.
    \end{align}
 We have two possible cases as follows.
    \begin{enumerate}[label=\arabic*.]
        \item If $x$ is a constant orbit in $K$, then its Conley-Zehnder index is given by
        \begin{align*}
            \mu_{\text{CZ}}(x) = \text{ind}(x;H_k) - n 
        \end{align*}
        as explained in (c) of Remark \ref{morse}. The identity \eqref{odd} becomes 
        \begin{align*}
            \text{ind}(x;H_k) = 2\ell+1.
        \end{align*}
        But this cannot happen due to the choice of $H_k$.\\
        \item If $x$ is a nonconstant orbit outside of $K$, then its Conley-Zehnder index satisfies $\mu_{\text{CZ}}(x) \leq -n$ because $x$ travels in the opposite direction of Reeb orbits on $(\partial K, \alpha)$. From the identity \eqref{odd}, we have the absurd inequality $2\ell+1 \leq 0$. \\
       
    \end{enumerate}
    The discussion of the possible cases 1 and 2 above implies that 
    \begin{align*}
         CF_w^{S^1,>L,-n+1}(H_k) = 0
    \end{align*}
    for each $k=1,2,3,\cdots$ and therefore
    \begin{align*}
        SH^{S^1,>L,-n+1}_M(K;\Lambda) = H\left(\widehat{\varinjlim_{k\to \infty}} CF_w^{S^1,>L,-n+1}(H_k)\right) = 0.
    \end{align*}
    This proves \eqref{mola}.
    \\
\end{proof}
The proof of Lemma \ref{inj} also proves the following statement.
\begin{corollary}
    Let $(M,\omega)$ be a $2n$-dimensional closed symplectically aspherical symplectic manifold and $K\subset M$ be a Liouville domain with index-bounded boundary. Suppose that every contractible Reeb orbit $\gamma$ of $(\partial K, \alpha)$ satisfies $\mu_{\text{CZ}} (\gamma) \geq n$.
    \begin{enumerate}[label=(\alph*)]
        \item $SH^{S^1,-,>L,*}_M(K) = 0$ for $* \geq -n+1$.
        \item If, furthermore, there exists a Morse function on $K$ which does not admit critical points of odd Morse index, then $SH^{S^1,>L,*}_M(K) = 0$ for $* \geq -n+1$.
    \end{enumerate}\qed
\end{corollary}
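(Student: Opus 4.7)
The plan is to extract the Conley-Zehnder-index bookkeeping already carried out in the proof of Lemma \ref{inj} and to observe that the inequalities produced there are in fact sharp enough to give chain-level vanishing throughout the half-line $*\geq -n+1$, not merely at the single degree $*=-n+1$ singled out in the lemma.

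For part (a), I would fix an arbitrary $H\in\mathcal{H}_K^{\mathrm{Cont}}$ and consider any generator $u^{\ell}\otimes x$ of $CF_w^{S^1,-,>L,*}(H)$ with $*\geq -n+1$. Since the negative complex is generated only by nonconstant orbits, $x$ corresponds to a Reeb orbit traversed in the opposite direction, and the hypothesis $\mu_{\mathrm{CZ}}(\gamma)\geq n$ yields $\mu_{\mathrm{CZ}}(x)\leq -n$. On the other hand, the grading convention \eqref{grcon} forces $-2\ell+\mu_{\mathrm{CZ}}(x)=*\geq -n+1$, so $\mu_{\mathrm{CZ}}(x)\geq -n+1+2\ell\geq -n+1>-n$, which contradicts the previous bound. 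Thus the chain complex vanishes in every degree $*\geq -n+1$ for every admissible $H$, and passing to the completed direct limit yields $SH^{S^1,-,>L,*}_M(K)=0$.

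For part (b), I would run the analogous analysis on the cofinal sequence $\{H_k\}$ constructed in the proof of Lemma \ref{inj}(b), where $H_k|_{S^1\times K}=\tfrac{1}{k}f$ for the hypothesized Morse function $f$ on $K$ admitting only even-index critical points. A generator $u^{\ell}\otimes x$ of $CF_w^{S^1,>L,*}(H_k)$ with $*\geq -n+1$ is either nonconstant, in which case the argument for (a) applies verbatim, or a constant orbit in $K$. In the latter case Remark \ref{morse}(c) gives $\mu_{\mathrm{CZ}}(x)=\mathrm{ind}(x;H_k)-n$, and combining with the grading identity forces $\mathrm{ind}(x;H_k)=*+n+2\ell$. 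A parity comparison against the assumption that $\mathrm{ind}(x;H_k)$ is always even rules out such contributions in the relevant degrees, and chain-level vanishing then passes to the cohomology of the completed direct limit.

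The main point to verify is that the Conley-Zehnder/Morse-index inequalities of Lemma \ref{inj} remain sharp enough to rule out every degree $*\geq -n+1$ uniformly, rather than only $*=-n+1$; this amounts to rerunning the same inequalities with the one-sided constraint $*\geq -n+1$ in place of the equality $*=-n+1$, and is essentially mechanical once the grading conventions of \eqref{grcon} and Remark \ref{morse}(c) are tracked carefully.
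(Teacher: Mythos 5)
Your argument for part (a) is correct and matches the paper's (implicit) reasoning: for a generator $u^{\ell}\otimes x$ of $CF_w^{S^1,-,>L,*}(H)$ with $x$ nonconstant and $*\geq -n+1$, the grading identity $-2\ell+\mu_{\text{CZ}}(x)=*$ gives $\mu_{\text{CZ}}(x)=*+2\ell\geq -n+1+2\ell>-n$, contradicting $\mu_{\text{CZ}}(x)\leq -n$, so the chain groups vanish identically in those degrees.

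Part (b), however, has a genuine gap. Your claim that ``a parity comparison \ldots rules out such contributions in the relevant degrees'' does not survive the passage from the single degree $*=-n+1$ to the full half-line $*\geq -n+1$. For a constant orbit $x$ in $K$ you correctly record $\mu_{\text{CZ}}(x)=\text{ind}(x;H_k)-n$ and hence $\text{ind}(x;H_k)=*+n+2\ell$. When $*=-n+1$ this forces $\text{ind}(x;H_k)=2\ell+1$, which is odd, and the contradiction is immediate. But for $*=-n+2$ it forces $\text{ind}(x;H_k)=2\ell+2$, which is even and entirely compatible with the hypothesis. More generally the parity obstruction kills only the degrees $*$ with $*+n$ odd; in the even-parity degrees $-n+2,-n+4,\ldots$ a constant orbit of even Morse index $\geq 2$ produces a nonzero generator $u^{\ell}\otimes x$ of $CF_w^{S^1,>L,*}(H_k)$, and no chain-level vanishing follows. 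Since the differential has degree $+1$ and thus changes parity, it also cannot create cancellations between these even-parity constant-orbit generators, so the claimed ``essentially mechanical'' extension fails at exactly this point. A proof of (b) for all $*\geq -n+1$ would need a separate argument — for instance an analysis of what the completed direct limit does to the even-degree constant-orbit contributions, or a restriction on the Morse indices that can actually occur for the functions $f$ constructed in Lemma \ref{inj}(b) — none of which appears in your proposal.
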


\begin{theorem}\label{slight}
Let $(M,\omega)$ be a $2n$-dimensional symplectically aspherical closed symplectic manifold and $K\subset M$ be a Liouville domain with index-bounded boundary. If every contractible Reeb orbit $\gamma$ of $(\partial K, \alpha)$ satisfies $\mu_{\text{CZ}} (\gamma) \geq n$, then
     \begin{align}\label{shgh}
         c^{SH}(M,K) \leq c_1^{GH}(M,K)
     \end{align}
     and therefore, together with \eqref{inq},
     \begin{align*}
         c^{SH}(M,K) = c_1^{GH}(M,K).
     \end{align*}
\end{theorem}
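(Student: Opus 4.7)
The plan is as follows. Since the inequality $c_1^{GH}(M,K) \leq c^{SH}(M,K)$ is already given by \eqref{inq}, it suffices to prove \eqref{shgh}. Unwinding the definitions, this amounts to the implication: for every $L<0$ with $-L \notin \text{Spec}(\partial K, \alpha)$, if $j^{S^1,L}(1 \otimes [K,\partial K]) = 0$ then $j^L([K,\partial K]) = 0$. I would establish this by a brief diagram chase combining the exact triangles of Theorem \ref{ing} with the naturality of the Gysin maps $\xi^L,\zeta^L$ from Theorem \ref{rbo}, with Lemma \ref{inj}(a) providing the key surjectivity.

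First I identify the Floer cohomological degree of $[K,\partial K]$. By Lefschetz duality this class corresponds to $1 \in H^0(K;\Lambda)$, and by the Morse reinterpretation in Remark \ref{morse}(c) the minimum of a $C^2$-small negative Hamiltonian on $K$ carries Conley--Zehnder index $-n$, so $j^L([K,\partial K]) \in SH^{>L,-n}_M(K;\Lambda)$. Since $\xi^L$ and $\zeta^L$ are both induced by the chain-level formula $x \mapsto 1 \otimes x$, applying this operation functorially to the triangle of Theorem \ref{ing} produces a morphism of long exact sequences whose relevant portion is
\[
\begin{tikzcd}[column sep=small]
SH^{-,>L,-n-1}_M(K;\Lambda) \ar[r,"\partial"] \ar[d,"\zeta^{L,-n-1}"'] & H^{-n}(K,\partial K;\Lambda) \ar[r,"j^L"] \ar[d,"\iota"'] & SH^{>L,-n}_M(K;\Lambda) \ar[d,"\xi^{L,-n}"] \\
SH^{S^1,-,>L,-n-1}_M(K;\Lambda) \ar[r,"\partial^{S^1}"'] & H^{S^1,-n}(K,\partial K;\Lambda) \ar[r,"j^{S^1,L}"'] & SH^{S^1,>L,-n}_M(K;\Lambda)
\end{tikzcd}
\]
where $\iota(w) = 1 \otimes w$ is the inclusion into the $u^0$-summand of $H^{S^1,*}(K,\partial K;\Lambda) \cong \Lambda[u] \otimes H^*(K,\partial K;\Lambda)$.

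The chase then runs as follows. Suppose $j^{S^1,L}(1 \otimes [K,\partial K]) = 0$; by exactness of the bottom row there exists $y \in SH^{S^1,-,>L,-n-1}_M(K;\Lambda)$ with $\partial^{S^1}(y) = 1 \otimes [K,\partial K]$. By Lemma \ref{inj}(a) the map $\zeta^{L,-n-1}$ is surjective, so we may choose $x \in SH^{-,>L,-n-1}_M(K;\Lambda)$ with $\zeta^{L,-n-1}(x) = y$. Commutativity of the left square gives $\iota(\partial x) = \partial^{S^1}(y) = 1 \otimes [K,\partial K]$, and the injectivity of $\iota$ forces $\partial x = [K,\partial K]$. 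Exactness of the top row then yields $j^L([K,\partial K]) = 0$. Taking the supremum over admissible $L$ establishes \eqref{shgh}.

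The main obstacle I anticipate is setting up the commutative diagram rigorously: one must verify that the triangle of Theorem \ref{ing} is constructed $S^1$-equivariantly so that $\xi^L$ and $\zeta^L$ intertwine the connecting maps as displayed. Once this functoriality is granted the chase is immediate. It is worth noting that the argument uses only the surjectivity statement Lemma \ref{inj}(a); the injectivity statement (b), which requires the auxiliary Morse-theoretic hypothesis, is avoided. This is precisely why the Conley--Zehnder bound $\mu_{\text{CZ}}(\gamma) \geq n$ is enough here, whereas Theorem \ref{dycon} invoked the stronger dynamical convexity.
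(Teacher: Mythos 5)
Your argument is essentially identical to the paper's: both reduce \eqref{shgh} to showing $j^{S^1,L}(1 \otimes [K,\partial K]) = 0 \Rightarrow j^{L}([K,\partial K]) = 0$, combine the two exact triangles of Theorem \ref{ing} into a morphism of long exact sequences via the chain-level map $x \mapsto 1 \otimes x$, and run the same diagram chase using the surjectivity of $\zeta^{L,-n-1}$ from Lemma \ref{inj}(a). The only difference is cosmetic: you make explicit the injectivity of $\iota : H(K,\partial K;\Lambda) \to H^{S^1}(K,\partial K;\Lambda)$ needed to conclude $\partial x = [K,\partial K]$, which the paper leaves implicit when it asserts $\beta_1(a') = [K,\partial K]$.
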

\begin{proof}
To prove the inequality \eqref{shgh}, we should show that 
\begin{align}\label{www}
    j^{S^1,L}(1\otimes [K, \partial K])=0 \Longrightarrow j^{L}([K, \partial K])=0.
\end{align}
We can combine two exact triangles in Theorem \ref{ing} to make a bigger commutative diagram as follows.
\begin{align}\label{combine}
    \includegraphics[scale=1.2]{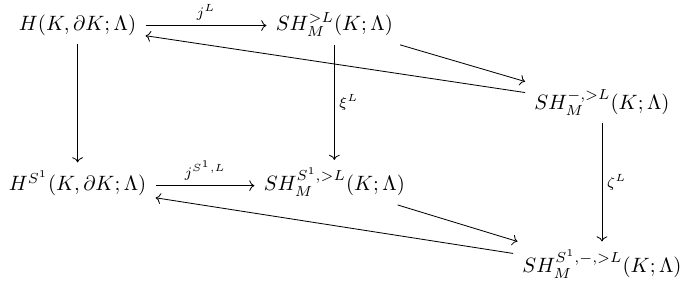}
\end{align}
Every vertical map in \eqref{combine} is induced by $x \mapsto 1\otimes x$ on the chain level. Therefore, the first vertical map $$H(K,\partial K; \Lambda) \lr H^{S^1}(K,\partial K; \Lambda)$$ of \eqref{combine} maps $x$ to $1\otimes x$ and the second vertical map of \eqref{combine} is $$\xi^L : SH^{>L}_M(K;\Lambda) \lr SH^{S^1, >L}_M(K;\Lambda) $$ given in \eqref{gysin}. To see \eqref{www}, we focus on the part of \eqref{combine} to have the following commutative diagram. 
\begin{align}\label{comp}
     \includegraphics[scale=1.2]{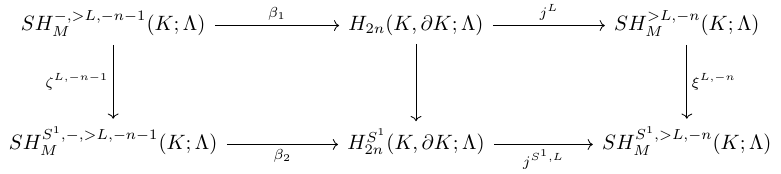}
\end{align}
For convenience, we label the first map on each row of \eqref{comp} by $\beta_1$ and $\beta_2$, respectively.  If $j^{S^1,L}(1 \otimes [K,\partial K]) = 0$, then we can choose $a \in SH^{S^1,-,>L,-n-1}(K;\Lambda)$ such that $$\beta_2 (a) = 1 \otimes [K,\partial K]$$ because of the exactness of the second row. By (a) of Lemma \ref{inj}, the map $\zeta^{L,-n-1}$ is surjective and there exists $a' \in SH^{-,>L,-n-1}_M(K;\Lambda)$ such that $$\zeta^{L,-n-1} (a') = a.$$ The commutativity of the first square of \eqref{comp} implies that $\beta_1(a') = [K,\partial K]$. Therefore, $j^L([K,\partial K]) = 0$ since the first row of \eqref{comp} is also exact.
\\
\end{proof}
\begin{remark}
    In the proof of Theorem \ref{slight}, the surjectivity of $\zeta^{L,-n-1}$ plays an important role. We want to point out that the injectivity of $\xi^{L,-n}$ can be used to prove $c^{SH}(M,K) \leq c^{GH}_1(M,K)$. More precisely, if $j^{S^1,L}(1 \otimes [K,\partial K]) = 0$, then $$\xi^{L,-n}(j^L([K, \partial K])) = j^{S^1,L}(1 \otimes [K,\partial K])=0$$ by the commutativity of the second square of \eqref{comp}. Assuming the injectivity of $\xi^{L,-n}$, we can derive that $j^L([K,\partial K]) = 0$ and hence the desired inequality. In view of Lemma \ref{inj}, we can have the following assertion, which is weaker than Theorem \ref{slight}: Let $(M,\omega)$ be a $2n$-dimensional closed symplectically aspherical symplectic manifold and $K\subset M$ be a Liouville domain with index-bounded boundary. If every contractible Reeb orbit $\gamma$ of $(\partial K, \alpha)$ satisfies $\mu_{\text{CZ}} (\gamma) \geq n$ and there exists a Morse function with critical points of odd Morse index, then $c^{SH}(M,K) \leq c_1^{GH}(M,K)$.
\end{remark}
Theorem \ref{slight} still holds for the non-relative case because the exact triangle \eqref{gysin} in Theorem \ref{rbo} is actually a relative version of exact triangle constructed in \cite{bo} concerning (usual) symplectic cohomology $SH(K;\Lambda)$ and $S^1$-equivariant cohomology $SH^{S^1}(K;\Lambda)$. Also, the proofs of Lemma \ref{inj} and Theorem \ref{slight} still hold if we disregard the subscript $_M$. Note that in the non-relative case, we don't need the index-boundedness assumption on the boundary because admissible Hamiltonian functions to define $SH(K;\Lambda)$ and $SH^{S^1}(K;\Lambda)$ are chosen to be linear at infinity. For a Liouville domain $K$ with $c_1(TK)=0$, the contact structure $\xi$ on $(\partial K, \alpha)$ also satisfies that $c_1(\xi) = 0$ because $c_1(\xi) = i^*c_1(TK)$ where $i: \partial K \hookrightarrow K$ is the inclusion map. So, Reeb orbits on $(\partial K, \alpha)$ have well-defined $\ZZ$-valued Conley-Zehnder indices.

\begin{corollary}\label{nonrela}
    Let $K$ be a $2n$-dimensional Liouville domain with $c_1(TK)=0$. If every contractible Reeb orbit $\gamma$ of $(\partial K, \alpha)$ satisfies $\mu_{\text{CZ}} (\gamma) \geq n$, then
    \begin{align*}
        c_1^{GH}(K) = c^{SH}(K). 
    \end{align*}
    \qed
\end{corollary}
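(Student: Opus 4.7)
The plan is to mirror the proof of Theorem \ref{slight} verbatim in the non-relative setting. The non-relative versions of the main actors, the symplectic cohomology $SH(K;\Lambda)$ and the $S^1$-equivariant symplectic cohomology $SH^{S^1}(K;\Lambda)$, are defined using Hamiltonian functions on the symplectic completion $\widehat{K}$ that are linear at infinity, and the Bourgeois--Oancea Gysin-type exact triangle relating them is precisely the non-relative prototype of \eqref{gysin}; the triangle \eqref{ex} also has a non-relative counterpart, and the capacities $c^{SH}(K)$ and $c_1^{GH}(K)$ are defined via the analogous maps $j^L$ and $j^{S^1,L}$. Since $c_1(TK)=0$ pulls back along $i:\partial K\hookrightarrow K$ to give $c_1(\xi)=0$, each contractible Reeb orbit on $(\partial K,\alpha)$ carries a well-defined integer Conley--Zehnder index, so the hypothesis $\mu_{\text{CZ}}(\gamma)\geq n$ makes sense.

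The inequality $c_1^{GH}(K)\leq c^{SH}(K)$ is immediate from the definitions. For the reverse, I first establish the non-relative analogue of Lemma \ref{inj}(a), namely that
\begin{align*}
    \zeta^L : SH^{-,>L,-n-1}(K;\Lambda) \lr SH^{S^1,-,>L,-n-1}(K;\Lambda)
\end{align*}
is surjective. By the Gysin triangle, this reduces to $SH^{-,>L,-n+1}(K;\Lambda)=0$, which follows from the same numerical computation as in Lemma \ref{inj}(a): for any admissible $H$, a generator $u^\ell\otimes x$ of $CF_w^{S^1,-,>L,-n+1}(H)$ is a nonconstant orbit with $-2\ell+\mu_{\text{CZ}}(x)=-n+1$, whereas the orientation reversal \eqref{hamorbit} combined with the hypothesis forces $\mu_{\text{CZ}}(x)\leq -n$, giving the impossible inequality $2\ell+1\leq 0$.

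With surjectivity of $\zeta^L$ in hand, the conclusion is obtained by running the diagram chase at the end of the proof of Theorem \ref{slight}: if $j^{S^1,L}(1\otimes[K,\partial K])=0$, then by exactness of the $S^1$-equivariant row one lifts $1\otimes[K,\partial K]$ to some $a\in SH^{S^1,-,>L,-n-1}(K;\Lambda)$, further lifts $a$ through the surjective $\zeta^L$ to $a'\in SH^{-,>L,-n-1}(K;\Lambda)$, and then the commutativity of the first square in the non-relative analogue of \eqref{comp} together with exactness of the top row yields $j^L([K,\partial K])=0$. Combined with the trivial inequality, this gives the desired equality $c^{SH}(K)=c_1^{GH}(K)$.

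The only point that requires vigilance, rather than genuine novelty, is the elimination of the index-boundedness hypothesis on $\partial K$. In the relative setting this assumption enters when one needs the action-truncated complexes to behave well under the direct limit and completion over all $H\in\mathcal{H}_K^{\text{Cont}}$. In the non-relative setting, one works instead with Hamiltonians on $\widehat{K}$ that are linear at infinity with a single slope, and the cofinal sequence of such slopes makes each action-truncated Floer group finite-dimensional in each degree and well-behaved under continuation, so the same direct limit and completion arguments go through unchanged. This is the only mildly technical verification; the rest of the proof is purely formal, involving only the two exact triangles and the diagram chase, and introduces no ideas beyond those already in Lemma \ref{inj} and Theorem \ref{slight}.
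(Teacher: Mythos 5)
Your proposal is correct and takes essentially the same route as the paper: the paper also observes that $c_1(TK)=0$ gives $c_1(\xi)=0$ so that Conley--Zehnder indices are well-defined, that the Bourgeois--Oancea Gysin triangle is the non-relative prototype of \eqref{gysin}, that Lemma \ref{inj} and Theorem \ref{slight} go through verbatim upon dropping the subscript $M$, and that index-boundedness is dispensable because the admissible Hamiltonians are linear at infinity. You simply spell out the numerical index computation and diagram chase more explicitly than the paper's brief concluding paragraph, but no new ideas are introduced.
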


\bigskip

\bigskip

\Addresses
\end{document}